\documentclass[12pt,a4paper]{article}
\usepackage{amssymb}
\usepackage{amsmath}
\usepackage{amsthm}
\usepackage{mathrsfs}
\usepackage{hyperref}
\usepackage{moresize}
\usepackage{cancel}
\usepackage{bbm}
\usepackage{dsfont}
\usepackage{verbatim}
\usepackage{makeidx}
\usepackage{fullpage}
\newtheorem{theorem}{Theorem}[section]

\newtheorem{lemma}[theorem]{Lemma}
\newtheorem{remark}[theorem]{Remark}
\newtheorem{definition}{Definition}[section]
\newtheorem{example}{Example}[section]
\numberwithin{equation}{section}

\allowdisplaybreaks

\title{The average number of representations of an integer as a sum of two prime powers over multiples of a fixed integer}
\author{Alessandra Migliaccio and Alessandro Zaccagnini}
\date{\today}
\providecommand{\keywords}[1]{\textbf{\textit{Keywords}} #1}

\providecommand{\subjclass}[1]{\textbf{\textit{Classification}} #1}

\begin{document}

\maketitle

\abstract{\noindent We extend a result by Ikeda and Suriajaya (2025) to find the asymptotic behaviour of the average number of representations of an integer $n$, over multiples of a fixed $q\ge 2$, as a sum of two prime $k$-th powers, for $k\ge 2$.}

\keywords{Sums of prime powers, arithmetic progressions}
\newline \noindent
\subjclass{Primary 11P32; Secondary 11M26}

\section{Introduction}\label{sec1}
 We will develop the results of Ikeda and Suriajaya in \cite{ike25}, from the case $k=1$ to $k\ge 2$; namely, we want to estimate the average given by
        \[
        G_{q,k}(N)=\sum_{\substack{n\le N \\ q|n}} \psi_{2,k}(n),
        \]
        where 
        \[
        \psi_{2,k}(n)=\sum_{m_{1}^k + m_{2}^k = n}\Lambda(m_{1})\Lambda(m_{2}). 
        \]
In particular, for $k=1$, we recall the following result of \cite{ike25}.

        \begin{theorem}[Theorem 1.3 of \cite{ike25}]
        \label{th1}
         Assume that GRH holds for Dirichlet L-functions $L(s,\chi)$ associated with characters $\chi\bmod q$. For $2\le q\le N$, we have 
         \[
         G_{q,1}(N)=\frac{G_{1,1}(N)}{\phi(q)} + \mathcal{O}(N\log^3 N).
         \]
        \end{theorem}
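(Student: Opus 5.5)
The natural route is to detect the condition $q\mid n$ with characters rather than additive characters. For $k=1$ we have $n=m_1+m_2$, and we split according to the residues of $m_1,m_2$ modulo $q$. Write
\[
G_{q,1}(N)=\sum_{\substack{a,b \bmod q\\ a+b\equiv 0 \ (q)}} \ \sum_{\substack{m_1+m_2\le N\\ m_1\equiv a,\ m_2\equiv b \ (q)}}\Lambda(m_1)\Lambda(m_2).
\]
Pairs with $(a,q)>1$ or $(b,q)>1$ involve prime powers $p^j$ with $p\mid q$. Their contribution is at most $\sum_{p\mid q}\sum_{p^j\le N}\log p\cdot \psi(N)\ll N\log^2 N$, which is acceptable. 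For coprime residues, $a+b\equiv0$ forces $b\equiv -a$. Orthogonality of Dirichlet characters then gives
\[
\sum_{\substack{(a,q)=1}} \mathbf 1_{m_1\equiv a}\mathbf 1_{m_2\equiv -a}=\frac{1}{\phi(q)}\sum_{\chi \bmod q}\chi(-1)\,\chi(m_1)\overline{\chi}(m_2),
\]
where we use $\bar\chi(-a)\chi(a)$-type manipulations and replace $\chi(-1)$ suitably.

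The main term comes from the principal character $\chi_0$. It yields $\frac1{\phi(q)}\sum_{m_1+m_2\le N}\Lambda(m_1)\Lambda(m_2)$ restricted to $(m_1m_2,q)=1$, which is $G_{1,1}(N)/\phi(q)+\mathcal O(N\log^2 N/\phi(q))$ by the same estimate as above. The task is therefore to show that
\[
E:=\frac{1}{\phi(q)}\sum_{\chi\ne\chi_0}\chi(-1)\sum_{m_1+m_2\le N}\Lambda(m_1)\chi(m_1)\Lambda(m_2)\overline\chi(m_2)\ll N\log^3N .
\]
For each $\chi$, the inner sum equals $\sum_{m\le N}\Lambda(m)\chi(m)\,\psi(N-m,\bar\chi)$, where $\psi(x,\chi)=\sum_{n\le x}\Lambda(n)\chi(n)$.

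The key step, and the main obstacle, is to bound this convolution uniformly in $q\le N$ under GRH. I would first reduce to primitive characters; the cost is $\ll \log q\log N$ per step, which is harmless. Then I would use the explicit formula $\psi(x,\chi)=-\sum_{|\gamma|\le T}x^{\rho}/\rho+\mathcal O(\frac{x}{T}\log^2(qxT))$ with all $\rho=\tfrac12+i\gamma$. The naive bound $\psi(x,\chi)\ll x^{1/2}\log^2(qx)$ gives only $N^{3/2}\log^2 N$ per character, which is far too weak. I would instead evaluate the convolution in Mellin/Riemann–Liouville form:
\[
\sum_{m_1+m_2\le N}\Lambda\chi(m_1)\Lambda\bar\chi(m_2)=\frac{1}{2\pi i}\int_{(c)}\ \cdot\ \text{(not multiplicative)},
\]
and since this is not multiplicative, the standard approach is to compute it via explicit formulas. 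We have $\sum_m \Lambda\chi(m)\psi(N-m,\bar\chi)=\int_0^N\psi(N-t,\bar\chi)\,d\psi(t,\chi)$. Inserting the explicit formula for both factors produces double sums over zeros $\rho,\rho'$ with beta-function weights $N^{\rho+\rho'}\Gamma(\rho)\Gamma(\rho')/\Gamma(\rho+\rho'+1)$, of size $N\,|\gamma|^{-1/2}|\gamma'|^{-1/2}|\gamma+\gamma'|^{-1/2}$ roughly. Summing with zero density $\log(q(|t|+2))$ gives $\ll N\log^2(qN)$ per character, provided the sum over zeros converges. Handling convergence and truncation carefully is the delicate part; I would mimic the Goldston–Yang / Bhowmik–Schlage-Puchta treatment for $k=1$, $q=1$.

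Summing the per-character bound $N\log^2 N$ over the $\phi(q)-1$ characters and dividing by $\phi(q)$ gives $E\ll N\log^2 N$, so the bound is even better than required; the extra logarithm in the stated result absorbs the imprimitive and truncation losses. An alternative I would try first, if the double-zero sum proves awkward, is to bound $E$ by Cauchy–Schwarz combined with the GRH mean-square estimate $\int_0^N|\psi(x,\chi)-\delta_\chi x|^2dx\ll N^2\log^2(qN)$. This works after writing the convolution as $\int_0^N \psi(N-t,\bar\chi)\,d\psi(t,\chi)$ and integrating by parts, giving $\ll N\log^2 N$ per character up to a logarithm. Either way, we obtain $G_{q,1}(N)=G_{1,1}(N)/\phi(q)+\mathcal O(N\log^3N)$.
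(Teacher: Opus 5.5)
Your reduction is sound and parallels the paper's decomposition (Lemma \ref{th3} with $k=1$). The non-coprime residues and the passage from $(m_1m_2,q)=1$ to $G_{1,1}(N)$ cost only $\ll N\log^2 N$. Everything therefore hinges on a bound $S_\chi(N):=\sum_{m_1+m_2\le N}\chi(m_1)\Lambda(m_1)\overline{\chi}(m_2)\Lambda(m_2)\ll N\log^3 N$ for non-principal $\chi$, uniformly in $q\le N$. Neither of your two routes delivers this, so the heart of the theorem is missing.

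\emph{The zero-pair route.} The double sum over zeros does not converge absolutely. For $\rho=\frac12+i\gamma$, $\rho'=\frac12+i\gamma'$ with $\gamma,\gamma'\ge 1$, Stirling gives $|\Gamma(\rho)\Gamma(\rho')/\Gamma(\rho+\rho'+1)|\asymp|\gamma+\gamma'|^{-3/2}$. There are $\asymp T^2\log^2(qT)$ such pairs up to height $T$, so the absolute sum grows like $T^{1/2}\log^2(qT)$; your own weights $|\gamma|^{-1/2}|\gamma'|^{-1/2}|\gamma+\gamma'|^{-1/2}$ diverge at the same rate. Truncating at height $T$ does not rescue this. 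The error $\frac{x}{T}\log^2(qxT)$ in $\psi(N-m,\overline{\chi})$, summed against $\Lambda(m)$, is only bounded by $N^2T^{-1}\log^2(qN)$. That forces $T\gg N/\log N$, and there the zero-pair sum is of size $N^{3/2}$ up to logarithms. Doing better needs cancellation among pairs of zeros, which is exactly the hard part.

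\emph{The mean-square route.} This fails for a structural reason. Cauchy--Schwarz gives
\[
|S_\chi(N)|^2\le \psi(N)\sum_{m\le N}\Lambda(m)\,|\psi(N-m,\overline{\chi})|^2\ll N\log N\int_0^N|\psi(x,\overline{\chi})|^2\,dx\ll N^3\log N\log^2(2q),
\]
that is, only $S_\chi(N)\ll N^{3/2}$ up to logarithms. The long mean square says no more than that $\psi(x,\chi)$ is typically of size $x^{1/2}$. A bound through absolute values cannot see the cancellation between $\chi(m)\Lambda(m)$ and $\psi(N-m,\overline{\chi})$. Integrating by parts in $\int_0^N\psi(N-t,\overline{\chi})\,d\psi(t,\chi)$ merely swaps the roles of the two step functions.

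\emph{What is missing.} The cancellation has to be detected on the frequency side, as Ikeda--Suriajaya do and as \S\ref{subsec3.3} mimics. With $z=e^{-1/N}e(\alpha)$ one has exactly $S_\chi(N)=\int_0^1\Psi_1(z,\chi)\Psi_1(z,\overline{\chi})I_N(z^{-1})\,d\alpha$, and $|I_N(z^{-1})|\ll\min\{N,|\alpha|^{-1}\}$. Bound $|\Psi_1(z,\chi)\Psi_1(z,\overline{\chi})|\le\frac12\bigl(|\Psi_1(z,\chi)|^2+|\Psi_1(\overline{z},\chi)|^2\bigr)$ and split $|\alpha|$ dyadically. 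On each range, Gallagher's lemma turns $\int_{|\alpha|\le 1/(2h)}|\Psi_1(z,\chi)|^2\,d\alpha$ into $h^{-2}\int|\sum_{x<n\le x+h}\chi(n)\Lambda(n)e^{-n/N}|^2\,dx$. The GRH estimates of Lemma \ref{lem2} with $k=1$ then apply: the short-interval bound $J_2(X,h)\ll(h+1)X\log^2\bigl(3qX/(h+1)\bigr)$ carries the real information, and $J_1(X)\ll X^2\log^2(2q)$ handles the initial segment. Together they bound each dyadic piece by $\ll N\log^2(qN)$. The $\ll\log N$ dyadic ranges then give $N\log^3 N$ for $q\le N$. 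This is where the third logarithm comes from, so your closing claim of a per-character bound $N\log^2 N$ is not supported either.
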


        \begin{definition}
        From now on, by GRH we mean that $\mathfrak{R}(\rho)=1/2$, for every non-trivial zero $\rho$ of the Dirichlet L-function $L(s,\chi)$, where $\chi$ mod $q$ is a primitive character or is induced by a primitive character $\chi_{1}$ mod $q_{1}$, s.t. $q_{1}\mid q$.   \end{definition}
        \begin{remark}
        \label{remGRH}
        We will exploit the GRH only in Lemmas \ref{th3} and \ref{lem2}.
        \end{remark}
        \noindent In order to generalize Theorem \ref{th1}, we will mimick the strategy of \cite{ike25}, with some variations. Heuristically, we expect that the order of the main term, in $G_{q,k}(N)$, is given by $G_{1,k}(N)$, which can be roughly estimated as follows:
        \[
        G_{1,k}(N)\asymp  \sum_{n\le N}\left(\sum_{n=m_{1}^k + m_{2}^k} 1\right)\asymp N^{2/k},
        \]
        replacing $\Lambda$ by its average $1$, (for an explicit estimate of $G_{1,2}(N)$, see \cite{lan11}). 
        More precisely, our result is the following.

        \begin{theorem}
        \label{th2}
Let $q\ge 2$ be a fixed integer and $k\ge 2$. Assuming GRH for the modulus $q$, we have that
\[
G_{q,k}(N)=\frac{\Sigma_{k}(q)}{\phi(q)}\cdot G_{1,k}(N)+ \mathcal{O}\left(\frac{N^{1/k}\log^2 N\log q}{\phi(q)}\right),
\]
where $\Sigma_{k}(q):=\sum_{\chi^k =\chi_{0}} \chi(-1)$, as $N\to +\infty$.
\end{theorem}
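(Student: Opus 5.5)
Our plan is to detect the condition $q\mid n$ with characters and reduce everything to sums of $\Lambda(m)\chi(m^k)$. Most pairs $(m_1,m_2)$ have $(m_1m_2,q)=1$. We first remove the pairs in which $m_1$ or $m_2$ is a power of a prime $p\mid q$. There are $\mathcal{O}(\log q)$ such primes, and each has $\mathcal{O}(\log N)$ powers up to $N^{1/k}$, each of weight $\log p$. The other variable then ranges over $m\le N^{1/k}$ with total weight $\ll N^{1/k}$. So this discarded part is $\ll N^{1/k}\log N\log q$, which is acceptable up to the $\phi(q)$ normalisation (for fixed $q$ this normalisation is harmless).

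For the remaining pairs we have $q\mid m_1^k+m_2^k$ with $m_2$ invertible mod $q$, which is equivalent to $(m_1/m_2)^k\equiv -1 \pmod q$. Write $a=m_1\overline{m_2}$. The indicator of $a^k\equiv -1$ equals $\frac{1}{\phi(q)}\sum_{\chi}\overline{\chi}(-1)\chi(a^k)$. Substituting gives
\[
G_{q,k}(N)=\frac{1}{\phi(q)}\sum_{\chi \bmod q}\chi(-1)\sum_{m_1^k+m_2^k\le N}\Lambda(m_1)\Lambda(m_2)\chi^k(m_1)\overline{\chi}^k(m_2)+\text{error}.
\]
When $\chi^k=\chi_0$, the inner sum is $G_{1,k}(N)$ up to the coprimality correction already estimated. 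These characters produce the main term $\frac{\Sigma_k(q)}{\phi(q)}G_{1,k}(N)$, since $\chi(-1)=\overline{\chi}(-1)$.

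The remaining work is to show that for $\psi:=\chi^k\ne\chi_0$ the sum
\[
S_\psi(N)=\sum_{m_1^k+m_2^k\le N}\Lambda(m_1)\psi(m_1)\Lambda(m_2)\overline{\psi}(m_2)
\]
is $\ll N^{1/k}\log^2 N$ (or smaller) under GRH. We write it as $\sum_{m_2\le N^{1/k}}\Lambda(m_2)\overline{\psi}(m_2)\,\psi\bigl((N-m_2^k)^{1/k};\psi\bigr)$, where $\psi(x;\psi)=\sum_{m\le x}\Lambda(m)\psi(m)$. For non-principal $\psi$, GRH gives the explicit-formula bound $\psi(x;\psi)\ll x^{1/2}\log^2(qx)$; this is the role of Lemma \ref{th3}/\ref{lem2}. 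Inserting it term by term gives only $N^{1/k}\cdot N^{1/(2k)}\log^2N$, which is too large.

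So the main obstacle is to exploit the oscillation in both variables. We will use the truncated explicit formula $\psi(x;\psi)=-\sum_{|\gamma|\le T}x^\rho/\rho+\text{error}$ and sum over $m_2$ first. This leads to sums $\sum_{m_2}\Lambda(m_2)\overline{\psi}(m_2)(N-m_2^k)^{\rho/k}$. By partial summation against $\psi(t;\overline{\psi})$, which is again $\ll t^{1/2}\log^2$, the factor $(N-t^k)^{\rho/k}$ has total variation $\ll |\rho| N^{1/(2k)}$. Combining this with the $1/|\rho|$ weights yields a double sum over zeros of $N^{1/k}/(|\rho||\rho'|)$-type terms. Alternatively, the same structure arises from a symmetric Mellin/Perron representation with the Beta-function kernel $\Gamma(\rho/k)\Gamma(\rho'/k)/\Gamma((\rho+\rho')/k)$, whose decay makes the double zero sum converge up to logarithmic factors. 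This should give $S_\psi\ll N^{1/k}\log^2N$, presumably in a form that is uniform in $\psi$, so that summing over the at most $\phi(q)$ characters and dividing by $\phi(q)$ keeps the stated error. If the Beta-kernel route is the one taken (as for Ikeda--Suriajaya at $k=1$), the key estimate is the convergence of $\sum_{\rho,\rho'}|\Gamma(\rho/k)\Gamma(\rho'/k)/\Gamma((\rho+\rho')/k)|$, handled via Stirling and zero-counting $N(T,\psi)\ll T\log(qT)$. The $\log q$ in the final bound comes from the coprimality correction together with the zero counts.
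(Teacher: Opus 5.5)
\textbf{The gap is the bound $S_\psi(N)\ll N^{1/k}\log^2N$ for $\psi=\chi^k\ne\chi_0$, which is the only hard step; the mechanism you propose for it does not work.}

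The rest of your plan is fine. Removing pairs with $(m_1m_2,q)>1$, detecting $m_1^k\equiv -m_2^k \pmod q$ by orthogonality, and taking the main term from the characters with $\chi^k=\chi_0$ is the sharp-cutoff analogue of Lemma~\ref{th3} and of the treatment of $\mathcal{I}_1$.

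\textbf{Why the double sum over zeros fails.} For the sharp condition $m_1^k+m_2^k\le N$, a pair of zeros $(\rho,\rho')$ enters with $N^{(\rho+\rho')/k}$ times the kernel $k^{-2}\Gamma(\rho/k)\Gamma(\rho'/k)/\Gamma((\rho+\rho')/k+1)$. By Stirling, when $\gamma$ and $\gamma'$ have the same sign the exponential factors cancel. The modulus is then $\asymp_k |\gamma|^{\frac{1}{2k}-\frac12}|\gamma'|^{\frac{1}{2k}-\frac12}|\gamma+\gamma'|^{-\frac1k-\frac12}$, which is homogeneous of degree $-3/2$. Summed over $|\gamma|,|\gamma'|\le T$ with zero density $\log(qT)$, this grows like $T^{1/2}\log^2(qT)$, for every $k$. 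The kernel you wrote, without the $+1$, is worse. So the double sum does not converge absolutely, even up to logarithmic factors.

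\textbf{Why the partial-summation variant fails.} Bound $\int \psi(t;\overline{\psi})\,d(N-t^k)^{\rho/k}$ by total variation times $\sup|\psi(t;\overline{\psi})|$. The factor $|\rho|$ from the variation cancels the $1/|\rho|$, so each single zero contributes $N^{1/k}\log^2N$. You do not get terms of type $N^{1/k}/(|\rho||\rho'|)$, and the sum over $\rho$ diverges.

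\textbf{Truncation does not rescue it.} If you truncate both explicit formulae at height $T$ and estimate with absolute values, you balance $N^{1/k}T^{1/2}$ from the zeros against $N^{2/k}/T$ from the truncation. This gives about $N^{4/(3k)}$, a power of $N$ too large.

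\textbf{What is missing is an $L^2$ argument.} You need to exploit cancellation between different zeros on average, not term by term. This is what the paper does, following Ikeda--Suriajaya's $k=1$ argument.
\begin{itemize}
\item It writes the sharp sum exactly as $\int_0^1\Psi_k(z,\chi^k)\Psi_k(z,\overline{\chi}^k)I_N(z^{-1})\,d\alpha$.
\item It uses $|I_N(z^{-1})|\ll\min\{N,|\alpha|^{-1}\}$ with a dyadic splitting in $\alpha$, and reduces the product to $|\Psi_k(z,\chi^k)|^2$.
\item It bounds $\int_0^{\xi}|\Psi_k(z,\chi^k)|^2\,d\alpha\ll \xi N^{1/k}\log^2 N$ via Gallagher's lemma and the GRH estimates of Lemma~\ref{lem2}. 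These control $\psi_k(x,\chi)$ and $\psi_k(x+h,\chi)-\psi_k(x,\chi)$ in mean square, not the pointwise bound $x^{1/2}\log^2$ you cite.
\end{itemize}
Summing over the dyadic ranges gives $\mathcal{I}_2\ll N^{1/k}\log^3N$, with no $1/\phi(q)$ saving; averaging a bound uniform in $\psi$ over the characters cannot produce one. So your target $S_\psi(N)\ll N^{1/k}\log^2N$ is stronger than what this method actually delivers.
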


\begin{remark}
\label{rem-O}
The constant in the $\mathcal{O}$-symbols may depend only on $k$, as we will always make explicit the dependence on $q$. 
\end{remark}

\begin{remark}
\label{rem:dif}
With respect to \cite{ike25}, we notice that a new constant appears, in the main term of $G_{q,k}(N)$. In fact, while $\Sigma_{1}(q)$ is identically equal to $1$, we now have some cases where $\Sigma_{k}(q)=0$; as we will see in Lemma \ref{app}, it is due to the relationship between the value of $\Sigma_{k}(q)$ and the number of solutions of certain equations mod $q$. In other terms, it comes from the fact that $k$-th powers are not uniformly distributed in arithmetic progressions. As a result, we find a main term for every character $\chi$ s.t. $\chi^k =\chi_{0}$, (see Lemma \ref{th3} and \eqref{eq:mathcalI1}), and not a single one.
\end{remark}
\begin{example}
If $q=3$ and $k=2$, we require an integer $n$ s.t. $n\equiv_{3}a^2 +b^2$; however, $\Lambda$-functions, in the definition of $\psi_{2,k}(n)$, force both $a$ and $b$ to be prime powers. As a result, we are looking for integers of the form $n=3^r+3^t$, which are infinitely many but infrequent.
\end{example}
\begin{remark}
\label{remlem}
We adapt Lemmas 2.2 and 2.4 of \cite{ike25} to our case, basing on variations of Lemma 2 of Languasco and Zaccagnini \cite{lan12} and Lemma 2 of Goldston and Vaughan \cite{gol96}.    
\end{remark}

\begin{remark}
\label{rem-j}
In another paper in preparation (see \cite{mig26}), we obtain a further generalization of \cite{ike25} for the representation of an integer as sums of $j\ge 3$ primes.
\end{remark}

\section{Lemmas and sketch of the proof}\label{sec2}
First of all, we define, for $N\ge 4$, a smooth version of $G_{k,q}(N)$ as 
        \[
        F_{q,k}(z)=\sum_{\substack{n \\ q|n}} \psi_{2,k}(n)z^{n},
        \]
        with $z=re(\alpha)$ and $r=e^{-1/N}$, and
        \[
        I_{N} (z) = \sum_{n\le N} z^{n}.
        \]
        For $1\le q\le N$, we have that
        \begin{equation}
        \label{eq:Gqk}
        \begin{split}
        \int_{0}^{1} F_{q,k}(z)I_{N}(z^{-1})\,d\alpha &=\int_{0}^{1} \sum_{\substack{n \\ q|n}} \psi_{2,k}(n)z^{n}\sum_{m\le N} (z^{-1})^{m}\,d\alpha \\
        &=\int_{0}^{1} \sum_{\substack{n \\ q|n}} \psi_{2,k}(n)r^{n}\sum_{m\le N}r^{-m} e(\alpha(n-m))\,d\alpha \\ &=\sum_{\substack{n \\ q|n}} \psi_{2,k}(n) \int_{0}^{1} \sum_{m\le N} r^{n-m} e(\alpha(n-m))\,d\alpha \\
        &=\sum_{\substack{n\le N \\ q|n}} \psi_{2,k}(n)=G_{q,k}(N).
        \end{split}
        \end{equation}
        Furthermore, we notice that, for $q=1$, we have that
        \begin{equation}
        \label{eq:g1k}
        G_{1,k}(N)=\int_{0}^{1} F_{1,k}(z)I_{N}(z^{-1})\,d\alpha=\\ \int_{0}^{1} \Psi_{k}(z)^2 I_{N
        }(z^{-1})\,d\alpha,
        \end{equation}
        where we denote by
        \[
        \Psi_{k}(z)=\sum_{n} \Lambda(n)z^{n^k}.
        \]
        This will provide all the main contributions of our asymptotic formula.
        Then, for a Dirichlet character $\chi\bmod q$, we introduce the function
        \[
        \Psi_{k}(z, \chi)=\sum_{n} \chi(n) \Lambda(n)z^{n^k}.
        \]
        We need an extension of Lemma 1 in Goldston and Suriajaya \cite{gol23}, that lets us approximate $F_{q,k}(z)$ in terms of $\Psi_{k}(z,\chi)$ and  $\Psi_{k}(z,\overline{\chi})$. We can generalize the aforementioned result as follows.
        \begin{lemma}
        \label{th3}
         For $N\ge 4$ and $q\ge 2$, 
        \begin{equation}
        \label{eq:Fqk}
        F_{q,k}(z)=\frac{1}{\phi(q)} \sum_{\chi \bmod q} \chi(-1)\Psi_{k}(z,\chi^k)\Psi_{k}(z,\overline{\chi}^k) + \mathcal{O}((\log N\log 2q)^2).
        \end{equation}
        In particular, for $q=1$, $F_{1,k}(z)=\Psi_{k}(z)^2$.
        \end{lemma}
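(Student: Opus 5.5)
The plan is to expand both sides as double sums over pairs $(m_1,m_2)$ and compare them term by term using orthogonality of characters modulo $q$. By definition,
\[
F_{q,k}(z)=\sum_{\substack{m_1,m_2\\ q\mid m_1^k+m_2^k}}\Lambda(m_1)\Lambda(m_2)z^{m_1^k+m_2^k}.
\]
On the other side, since $\chi^k(m)=\chi(m)^k$,
\[
\frac{1}{\phi(q)}\sum_{\chi\bmod q}\chi(-1)\Psi_k(z,\chi^k)\Psi_k(z,\overline{\chi}^k)
=\sum_{m_1,m_2}\Lambda(m_1)\Lambda(m_2)z^{m_1^k+m_2^k}\,\frac{1}{\phi(q)}\sum_{\chi\bmod q}\chi\bigl(-m_1^k\bigr)\overline{\chi}\bigl(m_2^k\bigr).
\]
If $(m_1m_2,q)>1$, every character value in the inner sum vanishes. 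If $(m_1m_2,q)=1$, orthogonality shows that the inner average equals $1$ exactly when $-m_1^k\equiv m_2^k \pmod q$, that is, when $q\mid m_1^k+m_2^k$, and equals $0$ otherwise. Hence the character sum reproduces precisely the part of $F_{q,k}(z)$ with $(m_1m_2,q)=1$. The case $q=1$ is immediate: there is only the trivial character, so $F_{1,k}(z)=\Psi_k(z)^2$.

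It remains to bound the terms of $F_{q,k}(z)$ with $q\mid m_1^k+m_2^k$ and $(m_1m_2,q)>1$. This is the only step needing care, since the trivial bound, with one variable ranging freely, would give a term of size $N^{1/k}$. The key observation is that both variables are then forced to be powers of primes dividing $q$. Because $\Lambda(m_i)\neq 0$, each $m_i$ is a prime power. Suppose, say, $p\mid m_1$ with $p\mid q$. From $q\mid m_1^k+m_2^k$ we get $p\mid m_2^k$, hence $p\mid m_2$, and so $m_2$ is also a power of $p$. Using $|z^{n}|=r^{n}=e^{-n/N}$, the contribution of these terms is therefore at most
\[
\Bigl(\sum_{p\mid q}\sum_{j\ge1}\log p\; e^{-p^{jk}/N}\Bigr)^2 .
\]

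To estimate the inner sum, split the $j$-range at $p^{jk}\le N$. The range $p^{jk}\le N$ contains at most $\log N/(k\log p)$ values of $j$. The tail $p^{jk}>N$ decays geometrically and contributes $\ll 1$. The inner sum is thus $\ll \log N+\log p$. Summing over $p\mid q$ gives
\[
\ll \omega(q)\log N+\log q\ll \log N\,\log 2q ,
\]
using $\omega(q)\ll\log 2q$, $q\le N$ and $N\ge4$. Squaring yields the error term $\mathcal{O}((\log N\log 2q)^2)$ in \eqref{eq:Fqk}.

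The argument is purely combinatorial and uses no zeros of $L$-functions. The main point to get right is the divisibility argument, which shows that no mixed terms occur, with one variable coprime to $q$ and the other not.
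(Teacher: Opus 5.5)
Your proof is correct and follows the same route as the paper. Orthogonality recovers exactly the pairs with $(m_1m_2,q)=1$, and the remaining pairs are bounded by noting that $m_1$ and $m_2$ must both be powers of the same prime $p\mid q$; this is the argument of (12) in Goldston--Suriajaya, which the paper cites for $S_2$ instead of writing it out. One small slip: the tail $p^{jk}>N$ contributes $\ll \log p$, not $\ll 1$, since its first term can be about $e^{-1}\log p$, but your stated bound $\ll \log N+\log p$ for the inner sum already accounts for this.
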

        \begin{proof}
        \begin{align*}
        F_{q,k}(z)&=\sum_{q|m_{1}^k +m_{2}^k} \Lambda(m_{1})\Lambda(m_{2})z^{m_{1}^k +m_{2}^k} = \sum_{m_{1}^k +m_{2}^k\,\equiv_{q} \,0} \Lambda(m_{1})\Lambda(m_{2})z^{m_{1}^k +m_{2}^k} \\   &=\biggl(\sum_{m_{1}:\,(m_{1},q)=1} + \sum_{m_{1}:\,(m_{1},q)>1}\biggr)\sum_{m_{2}:\,m_{2}^k\,\equiv_{q} \,-m_{1}^k} \Lambda(m_{1})\Lambda(m_{2})z^{m_{1}^k +m_{2}^k} \\ &=:S_{1} + S_{2}.
        \end{align*}
        We recall that, if $(m_{1},q)=1$ and $(m_{1}m_{2},q)=1$, by orthogonality, we have
        \begin{equation}
        \label{eq:ort}
        \begin{split}
        \frac{1}{\phi(q)}\sum_{\chi \bmod q}\chi(-m_{1}^k)\overline{\chi}(m_{2}^k) &= \frac{1}{\phi(q)}\sum_{\chi\bmod q}\chi(-1)\chi(m_{1}^k)\overline{\chi}(m_{2}^k) \\ 
        &= \mathbbm{1}\{{m_{2}^k\equiv_{q}-m_{1}^k \wedge \, (m_{1}m_{2},q)=1}\}.
        \end{split}
        \end{equation}
        Furthermore, we use the fact that $\chi(m^k)=\chi(m)^k$, thus we obtain that
        \begin{align*}
        S_{1}&=\frac{1}{\phi(q)}\sum_{m_{1}, m_{2}}\biggl(\,\sum_{\chi \bmod q}\chi(-1)\chi(m_{1})^k\overline{\chi}(m_{2})^k\biggr)\Lambda(m_{1})\Lambda(m_{2})z^{m_{1}^k + m_{2}^k} \\
        &=\frac{1}{\phi(q)}\sum_{\chi \bmod q}\chi(-1)\sum_{m_{1}}\chi(m_{1})^k\Lambda(m_{1})z^{m_{1}^k}\sum_{m_{2}}\overline{\chi}(m_{2})^k\Lambda(m_{2})z^{m_{2}^k} \\
        &=\frac{1}{\phi(q)}\sum_{\chi \bmod q}\chi(-1)\Psi_{k}(z,\chi^k)\Psi_{k}(z,\overline{\chi}^k).
        \end{align*}
        Instead, for $S_{2}$, we proceed as in $(12)$ of Lemma 1 of \cite{gol23}, therefore we achieve that
        \[
        F_{q,k}(z)=\frac{1}{\phi(q)}\sum_{\chi \bmod q}\chi(-1)\Psi_{k}(z,\chi^k)\Psi_{k}(z,\overline{\chi}^k) + \mathcal{O}((\log N\log 2q)^2).
        \]
        \end{proof}
         \noindent Now, in order to generalize Lemma 2.2 of \cite{ike25}, we need to adapt Lemma 2 of Languasco and Zaccagnini \cite{lan12} to our case.
          \begin{lemma}
        \label{lem1}
        Define the function
        \[
        E_{0}(\chi)=\begin{cases}
         1 & \chi =\chi_{0} \\
        0 & \chi\ne\chi_{0}
        \end{cases}
        \]
        and let $z=e^{-w}$, with $w=N^{-1}-2\pi i\alpha$. Assuming GRH, for every $q\ge 1$, it holds
        \[
        \Psi_{k}(z, \chi)=k^{-1}\Gamma\left(1+\frac{1}{k}\right)^{1/k}\frac{E_{0}(\chi)}{w^{1/k}} - k^{-1}\sum_{\rho}w^{-\rho/k}\Gamma\left(\frac{\rho}{k}\right) + \mathcal{O}_{k}(\log (2qN)\log 2q),
        \] 
        where $\rho$ runs over the non-trivial zeros of $L(s,\chi)$. 
        In particular, for $\chi = \chi_{0}$ and $q\ge 2$,
        \begin{equation}
        \label{eq:chi0}
        \Psi_{k}(z, \chi_{0})=\sum_{n:(n,q)=1} \Lambda(n)z^{n^k} = \Psi_{k}(z) + \mathcal{O}(\log 2q\log N).
        \end{equation}
        \end{lemma}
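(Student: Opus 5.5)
We follow the proof of Lemma 2 in \cite{lan12}: write the power series as a complex integral over a Mellin transform and move the line of integration. The starting point is the Cahen--Mellin formula $e^{-x}=\frac{1}{2\pi i}\int_{(c)}\Gamma(s)x^{-s}\,ds$ for $\Re x>0$ and $c>0$. Take $x=n^k w$, with $\Re w=N^{-1}>0$. Substituting $s\mapsto s/k$ and summing over $n$ with weight $\chi(n)\Lambda(n)$ gives, for $c>1$,
\[
\Psi_{k}(z,\chi)=\frac{1}{2\pi i}\cdot\frac{1}{k}\int_{(c)}\Gamma\left(\frac{s}{k}\right)w^{-s/k}\left(-\frac{L'}{L}(s,\chi)\right)ds .
\]
The interchange of sum and integral is justified by absolute convergence. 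The branch of $w^{-s/k}$ is the principal one, with $|\arg w|<\pi/2$.

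\textbf{Shifting the contour.} We move the line of integration to the left, to $\Re s=-1/2$ or to some fixed $-\delta$, and collect residues.
\begin{itemize}
\item When $\chi=\chi_0$, the pole of $-L'/L$ at $s=1$ has residue $1$. It contributes $k^{-1}\Gamma(1/k)w^{-1/k}=\Gamma(1+1/k)\,w^{-1/k}$, which is the $E_0(\chi)$ term.
\item The non-trivial zeros $\rho$ give $-k^{-1}\sum_\rho\Gamma(\rho/k)w^{-\rho/k}$.
\item The pole of $\Gamma(s/k)$ at $s=0$, together with a possible trivial zero or pole of $-L'/L$ there, gives a term of size $\ll \log(2q)+|\log w|$. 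This is $\ll\log 2q\,\log(2qN)$, because $|\log w|\ll\log N$ when $|\alpha|\le 1/2$. Here we should use the standard bound $\frac{L'}{L}(s,\chi)\ll\log q$ near $s=0$ for primitive $\chi$, after removing the trivial zero when $\chi(-1)=1$.
\item The remaining integral on $\Re s=-1/2$ is bounded using $|w^{-s/k}|=|w|^{1/(2k)}e^{\arg(w)\Im s/k}$ and $|\arg w|\le\pi/2$, together with Stirling's decay $|\Gamma(s/k)|\ll e^{-\pi|t|/(2k)}|t|^{-1/2k-1/2}$. The factor $-L'/L(-1/2+it,\chi)\ll\log(q(|t|+2))$ is controlled by the functional equation. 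The result is $\ll_k\log 2q$.
\end{itemize}
Moving the contour requires the horizontal segments to tend to zero. We choose heights $T$ that avoid zeros by $\gg1/\log(qT)$, as in \cite{lan12}.

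\textbf{Imprimitive characters.} If $\chi$ is induced by a primitive $\chi_1 \bmod q_1$, then $\Psi_k(z,\chi)$ and $\Psi_k(z,\chi_1)$ differ only on prime powers $p^m$ with $p\mid q$. Their difference is
\[
\sum_{p\mid q}\log p\sum_{m}z^{p^{mk}}\ll\sum_{p\mid q}\log p\cdot\frac{\log N}{\log p}\ll\omega(q)\log N\ll\log 2q\log N ,
\]
since $|z^{p^{mk}}|\le e^{-p^{mk}/N}$ makes the terms with $p^{mk}\gg N\log N$ negligible. Proving the lemma for $\chi_1$ therefore suffices; the zeros of $L(s,\chi)$ and $L(s,\chi_1)$ differ only on $\Re s=0$, and those are absorbed as above. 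The same computation proves \eqref{eq:chi0}, since $\Psi_k(z,\chi_0)-\Psi_k(z)$ is exactly this sum over $p\mid q$.

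\textbf{Main obstacle.} The delicate point is the convergence and the size of $\sum_\rho w^{-\rho/k}\Gamma(\rho/k)$, and of the integral, uniformly in $\alpha$ when $|w|$ is small. For this we use GRH. It gives $|w^{-\rho/k}|=|w|^{-1/(2k)}\exp\big(\gamma\arg(w)/k\big)$, and the factor $e^{-\pi|\gamma|/(2k)}$ from Stirling balances $e^{|\gamma|\,|\arg w|/k}$, because $|\arg w|<\pi/2$. The sum over zeros then converges absolutely by the zero-counting estimate $N(T,\chi)\ll T\log(qT)$. This justifies moving the contour, but the sum itself is not estimated here; it is kept as a main-type term, as in the statement.
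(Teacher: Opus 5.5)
\textbf{Verdict: your overall approach is the paper's, but the bound on the shifted integral fails uniformly in $\alpha$ (the paper's proof has the same flaw), so there is a genuine, though repairable, gap.}

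\textbf{Where you agree with the paper.} You use Mellin inversion of $e^{-n^k w}$. You shift the contour to a line left of $\Re s=0$, collecting residues at $s=1$, at the non-trivial zeros and at $s=0$. You pass from $\chi$ to the inducing primitive $\chi_1$, and prove \eqref{eq:chi0}, by counting prime powers $p^m$ with $p\mid q$ and $p^{mk}$ not much larger than $N$. That last part is fine, and it is the part the paper uses later.

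\textbf{The gap.} The step that fails is the claim that the integral on $\Re s=-1/2$ is $\ll_k\log 2q$ uniformly in $\alpha\in[-\frac12,\frac12]$.
\begin{itemize}
\item Using only $|\arg w|\le\pi/2$, the factors $e^{t\arg(w)/k}$ and $e^{-\pi|t|/(2k)}$ cancel exactly on one half-line. What remains is $|w|^{1/(2k)}\int_1^{\infty}t^{-1/2-1/(2k)}\log(qt)\,dt$, which diverges because $\frac12+\frac1{2k}<1$ for $k\ge2$.
\item Keeping the true decay does not help. Since $\frac\pi2-|\arg w|=\arctan\bigl(1/(2\pi N|\alpha|)\bigr)\asymp\min\{1,(N|\alpha|)^{-1}\}$, your bound is of order $N^{1/2-1/(2k)}\log(qN)$ when $|\alpha|\asymp1$. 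This is far larger than $\log(2qN)\log 2q$.
\item Your last paragraph raises uniformity but only treats the sum over zeros. That sum is kept as is and needs no uniform bound. Its convergence for fixed $w$ comes from $|\arg w|<\pi/2$, not from GRH. The real difficulty is large $N|\alpha|$, not small $|w|$.
\item The paper's proof has the same defect. It integrates on $\Re u=-3/4$, i.e.\ $\sigma=3/(4k)$, where the $t$-exponent $-\frac{3}{4k}-\frac12$ is again $>-1$. Hence $\int_0^\infty t^{-\sigma-1/2}\exp\bigl(\frac tk(\arg(w)-\frac\pi2)\bigr)\,dt$ is not bounded uniformly in $\alpha$. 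That line only works for $k=1$.
\end{itemize}

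\textbf{The repair.} Move further left, to $\Re s=-a$ with $k/2<a<k$ and $a\notin\mathbb{Z}$, e.g.\ $a=k-\frac12$.
\begin{itemize}
\item For $|t|\ge1$ one has $|\Gamma(s/k)w^{-s/k}|\ll_k|w|^{a/k}|t|^{-a/k-1/2}$ uniformly in $\arg w$, and now $\frac ak+\frac12>1$.
\item The functional equation still gives $\frac{L'}{L}(-a+it,\chi_1)\ll\log(q(|t|+2))$, since the line stays at distance $\frac12$ from the trivial zeros. So the integral is $\ll_k\log 2q$ uniformly.
\item The only new residues come from trivial zeros $s=-m$ with $1\le m<a<k$. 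There $\Gamma(s/k)$ is regular, and the residues are $-\frac1k\Gamma(-\frac mk)w^{m/k}\ll_k1$.
\end{itemize}

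\textbf{Two smaller points.}
\begin{itemize}
\item Your residue $\Gamma(1+\frac1k)w^{-1/k}$ at $s=1$ is correct. It is not the constant $k^{-1}\Gamma(1+\frac1k)^{1/k}$ in the statement, and the discrepancy, of size up to $N^{1/k}$, cannot be absorbed in the error term. State explicitly that the statement's constant is a misprint, rather than asserting that your term ``is'' the $E_0(\chi)$ term.
\item GRH is genuinely needed for the bound $\ll\log q$ on the constant term of $L'/L$ at $s=0$. Unconditionally this can fail, because an exceptional zero near $1$ is reflected to a zero near $0$. Under GRH every non-trivial zero has $|\rho|\ge\frac12$, which gives the bound.
\end{itemize}
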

        \begin{proof}
        We have to distinguish two possible scenarios:
                \begin{enumerate}
        \item $\chi=\chi_{0}$: by hypothesis, we get that
        \begin{align*}
         |\Psi_{k}(z,\chi_{0})-\Psi_{k}(z)|=\left|\sum_{n}(\chi_{0}(n)-1)\Lambda(n)z^{n^k} \right| \ll_{k} \log N\log 2q.   
        \end{align*}
        Applying the inverse Mellin transform and the residues theorem, we can express $\Psi_{k}(z,\chi_{0})$ as 
        \begin{align*}
        \Psi_{k}(z,\chi_{0})&=k^{-1}\Gamma\left(1+\frac{1}{k}\right)^{1/k}w^{-1/k} - k^{-1}\sum_{\rho}w^{-\rho/k}\,\Gamma\left(\frac{\rho}{k}\right) \\ &\qquad+ \mathcal{O}_{k}(\log 2qN\log 2q).
        \end{align*}
        \item $\chi \ne \chi_{0}$: we adapt Lemma 2 of \cite{lan12}, with $e^{-n^k /N}e(n^k\alpha)$, in the definition of $W(\chi, \eta,z)$, for $k>1$. Referring to Chapter $14$ of Davenport \cite{dav00}, we can estimate $L(s,\chi)$, for $s\in\mathbb{C}$, with $L(s,\chi_{1})$, if $\chi$ is not a primitive character and $\chi_{1}$ induces $\chi$. In fact, for every $\sigma>0$, we have
        \[
        \left| \frac{L'}{L}(s, \chi) - \frac{L'}{L}(s,\chi_{1})\right|\le \sum_{p|q} p^{-\sigma}\frac{\log p}{1-p^{-\sigma}}\le \sum_{p|q}\log p\le \log q.
        \]
     Similarly, we have that 
        \[
        |\Psi_{k}(z,\chi)-\Psi_{k}(z,\chi_{1})|\le \sum_{\substack{n\ge 1: \\ (n,q)>1 \\ (n, q_{1})=1}} \Lambda(n)e^{-n^k /N}\ll \log N\log q.
        \]
        We will use these inequalities for the third summand of the following approximation of $\Psi_{k}(z,\chi)$, (see Chapter 19 of \cite{dav00}):
        \begin{align*}
        \Psi_{k}(z,\chi)&=-k^{-1}\sum_{\rho}w^{-\rho/k}\Gamma(\rho/k) + C(\chi) \\ 
        &\qquad- \frac{1}{2\pi ik}\int_{(-k\sigma)} \frac{L'}{L}(u, \chi)\Gamma\left(\frac{u}{k}\right)w^{-u/k}\,du,
        \end{align*}
        \noindent where $C(\chi)$ only depends on the character $\chi$ and  $\sigma=3/4k$.
        By equations $(1)$ and $(4)$ of Chapter 16 of \cite{dav00}, we know that, for $-1\le(\mathfrak{R}(u)=-k\sigma)\le 2$ and for a primitive character $\chi_{1}$, 
        \begin{align*}
        \frac{L'}{L}\left(u, \chi_{1}\right) &= \sum_{\rho:\,|t-\gamma| < 1} \frac{1}{u-\rho}\, +\, \mathcal{O}(\log(q(|t|+2)) \ll \log(q(|t|+2)),
        \end{align*}
         because each summand is $\ll 1$ on the integration line $u=-k\sigma +i t$ and the number of non-trivial zeros $\rho$, in the interval $|t-\gamma|<1$, is $\mathcal{O}(\log q(|t|+2))$, (see (3) of Chapter 16 of \cite{dav00}). Thus,
        \[
        \left|\frac{L'}{L}\left(-\frac{3}{4}+it, \chi_{1}\right)\right|\ll \log(q(|t|+2)).
        \]
        Moreover, as in Lemma 2 of \cite{lan12}, it holds that
        \[
        |w^{-u/k}| = |w|^{\sigma}\exp{(tk^{-1}\arg{(w)})}, \quad\text{for $|\arg{(w)}|\le \frac{\pi}{2}$},
        \]
        and, by the Stirling formula, $\Gamma(u/k)\ll (|t|/k)^{-\sigma-1/2}\exp{(-|t|\pi/2k)}$. Hence,
        
        \begin{align*}
        \frac{1}{2\pi k}&\int_{(-k\sigma)}\left|-\frac{L'}{L}\left(u, \chi\right)\Gamma\left(\frac{u}{k}\right)w^{-u/k}\right|\,du \\ \\ &\le \frac{1}{2\pi k}\int_{(-k\sigma)}\left|-\frac{L'}{L}\left(u, \chi\right)+\frac{L'}{L}\left(u, \chi_{1}\right)\right|\cdot\left|\Gamma\left(\frac{u}{k}\right)w^{-u/k}\right|\,du \\ \\ &\qquad+\frac{1}{2\pi k}\int_{(-k\sigma)}\left|\frac{L'}{L}\left(u, \chi_{1}\right)\right|\cdot\left|\Gamma\left(\frac{u}{k}\right)w^{-u/k}\right|\,du \\ \\ &\ll\frac{\log q}{2\pi k}\int_{(-k\sigma)}\left|\Gamma\left(\frac{u}{k}\right)w^{-u/k}\right|\,du  \\ \\ &\qquad+\frac{1}{2\pi k}\int_{(-k\sigma)}\left|\frac{L'}{L}(u,\chi_{1})\Gamma\left(\frac{u}{k}\right)w^{-u/k}\right|\,du \\ \\ 
        &\ll_{k} \log q \cdot|w|^{\sigma} \int_{0}^{+\infty}t^{-\sigma-1/2}\exp\left(\frac{t}{k}\left(\arg(w)-\frac{\pi}{2}\right)\right)\,dt \\ \\ 
        &\qquad+ |w|^{\sigma}\int_{0}^{1}\log(q(t+2))\,dt  \\ \\ &\qquad+ |w|^{\sigma}\int_{1}^{+\infty} \log(q(t+2))\cdot t^{-\sigma-1/2}\exp\left(\frac{t}{k}\left(\arg(w)-\frac{\pi}{2}\right)\right)\,dt \\ \\ 
        &\ll_{k} |w|^{\sigma} (\log q + 1+\log q +1) = 2|w|^{\sigma}(1+\log q),
        \end{align*}
        by the fact that $w=N^{-1}-2\pi i\alpha\ll 1$. \newline For $C(\chi)$, we can argue as in Lemma 2 of \cite{lan12}, which is based on Chapter 19 of \cite{dav00}, and conclude that 
        \[
        C(\chi)=
        \begin{cases}
         -(L'/L)(0,\chi) & \text{if $\chi$ is odd} \\
         \log w -b(\chi)-\Gamma'(1) &\text{if $\chi$ is even},
        \end{cases}
        \]
        where $\log w\ll 1$ and
        \[
        b(\chi)=-\sum_{\rho} \left(\frac{1}{\rho}-\frac{1}{2-\rho}\right) \,+\,\mathcal{O}(1) = -\sum_{|\gamma|<1}\frac{1}{\rho} + \mathcal{O}(\log 2q) \ll \log^2 2q. 
        \] Finally, we have the estimate, as
        \[
        \Psi_{k}(z,\chi)=-k^{-1}\sum_{\rho}w^{-\rho/k}\Gamma(\rho/k) + E(q,N),
        \]
        where
        \[
        E(q,N)=\begin{cases}
         1+\log^2 2q &\text{if $\chi$ is a primitive character} \\
         (\log N)(\log 2q) + \log^2  2q & \text{if $\chi$ is not primitive. }
        \end{cases}
        \]
        \end{enumerate}
        \end{proof}
       Now, mimicking the strategy of \cite{ike25}, by Lemmas \ref{th3}, \ref{lem1} and formula \eqref{eq:Gqk}, we decompose $G_{q,k}(N)$ as
\begin{align}
\notag
  G_{q,k}(N)
  &=
  \int_{0}^{1} \biggl(\frac{1}{\phi(q)} \sum_{\chi \bmod q} \chi(-1)\Psi_{k}(z,\chi^k)\Psi_{k}(z,\overline{\chi}^k) + \mathcal{O}((\log N\log 2q)^2)\biggr) \cdot \\
\notag
  &\qquad\cdot
  I_{N}(z^{-1})\,d\alpha \\
  &=:
  \mathcal{I}_{1} +\mathcal{I}_{2} + \mathcal{O}(\mathcal{I}_{3}), 
\label{eq:stim}
\end{align}
where
\begin{equation}
\label{eq:mathcalI1}
\mathcal{I}_{1} =\frac{1}{\phi(q)}\sum_{\chi^k =\chi_{0}}\chi(-1)\int_{0}^{1} |\Psi_{k}(z,\chi_{0})|^2I_{N}(z^{-1})\,d\alpha, 
\end{equation}
\begin{equation}
\label{eq:mathcalI2}
\mathcal{I}_{2} =\frac{1}{\phi(q)} \sum_{\substack{\chi \bmod q \\ \chi^k\ne\chi_{0}}} \chi(-1)\int_{0}^{1} \Psi_{k}(z,\chi^k)\Psi_{k}(z,\overline{\chi}^k) I_{N}(z^{-1})\,d\alpha
\end{equation}
and
\begin{equation}
\label{eq:mathcalI3}
\mathcal{I}_{3} =(\log N\log 2q)^2\int_{0}^{1} |I_{N}(z^{-1})|\,d\alpha.
\end{equation}

\subsection*{Sketch of the proof}
\begin{itemize}
\item By computations in (2.4)-(2.7) of \cite{ike25}, we know that 
\begin{equation}
\label{eq:I_N}
|I_{N}(z^{-1})|\le e \min\{N,|\alpha|^{-1}\} \implies \int_{0}^{1}|I_{N}(z^{-1})|\,d\alpha=\mathcal{O}(\log N), 
\end{equation}
thus, from \eqref{eq:mathcalI3}, we conclude that $\mathcal{I}_{3}=\mathcal{O}(\log^2 2q \cdot\log^3 N)$.
\item Recalling the asymptotic relation \eqref{eq:chi0} of Lemma \ref{lem1} 
\[
\Psi_{k}(z,\chi_{0})=\Psi_{k}(z)+\mathcal{O}(\log 2q\log N),
\]
we get that $\mathcal{I}_{1}$ contains all the main terms of $G_{q,k}(N)$, as
\begin{align*}
\mathcal{I}_{1}&=\frac{1}{\phi(q)}\sum_{\chi^k =\chi_{0}}\chi(-1) \int_{0}^{1}(\Psi_{k}(z)+\mathcal{O}(\log q\log N))^2 I_{N}(z^{-1})\,d\alpha  \\
&=\left(\frac{1}{\phi(q)}\sum_{\chi^k =\chi_{0}}\chi(-1)\right) (G_{1,k}(N)+\mathcal{O}(N^{1/k}\log^2 N\log q \cdot \phi(q)^{-1})) \\ 
&=:\frac{\Sigma_{k}(q)}{\phi(q)}\cdot G_{1,k}(N)+\mathcal{O}(N^{1/k}\log^2 N\log q \cdot \phi(q)^{-1}),
\end{align*}
basing on (2.8)-(2.11) of \cite{ike25} and on the definition of $\Sigma_{k}(q)$, (see $\S$ \ref{sec:sigma} for its evaluation).

\item Arguing as in (2.12) of \cite{ike25}, by \eqref{eq:I_N}, we reduce to the estimate of
\begin{align*}
|\mathcal{I}_{2}|&\ll \max_{\chi^k\ne\chi_{0}}\sum_{0\le m<\log_{2}N}\frac{N}{2^m}\int_{0}^{2^{m+1}/N}|\Psi_{k}(z,\chi^k)|^2\,d\alpha \\ &=:\max_{\chi^k\ne\chi_{0}}\sum_{0\le m<\log_{2}N}\frac{N}{2^m}\cdot I_{m}.
\end{align*}
Denoting by $h=N/2^{m+2}$, we apply Gallagher's Lemma (see Lemma 1.9 of Montgomery \cite{mon71} and \S \ref{subsec3.3}) to $I_{m}$, i.e.
\begin{align*}
I_{m}&\ll \frac{1}{h^2}\int_{0}^{h}\left|\sum_{n\le x}\chi(n)^k \Lambda(n)e^{-n^k/N}\right|^2\,dx \\ 
&\qquad+\frac{1}{h^2}\int_{0}^{+\infty}\left|\sum_{x<n\le x+h}\chi(n)^k \Lambda(n)e^{-n^k/N}\right|^2\,dx \\
&=:\frac{1}{h^2}(I_{1}(N,h)+I_{2}(N,h)).
\end{align*}
Thanks to partial summation, Lemma 2.4 of \cite{ike25} and the Cauchy-Schwarz inequality, we get that
\begin{align*}
\mathcal{I}_{2}\ll N^{1/k}\log^3 N\log^2 q < N^{1/k\,+1},
\end{align*}
(for the details, see \S\ref{subsec3.3}).
\end{itemize}

\section{Proof of Theorem \ref{th2}}\label{sec3}

\subsection
{\textbf{Evaluation of \texorpdfstring{$\mathcal{I}_{3}$}{I3}}}
As in $(2.4)-(2.7)$ of \cite{ike25}, by \eqref{eq:I_N}, $\mathcal{I}_{3}$ is estimated as we have already seen in the sketch of the proof.
\subsection{Evaluation of \texorpdfstring{$\mathcal{I}_{1}$}{I1}}
By Lemma \ref{th3}, formula \eqref{eq:chi0} of Lemma \ref{lem1} and \eqref{eq:g1k}, we can rewrite \eqref{eq:mathcalI1} as
\begin{equation}
\label{sec3:1}
\begin{split}
\mathcal{I}_{1}&=\frac{\Sigma_{k}(q)}{\phi(q)}\int_{0}^{1} (\Psi_{k}(z)+\mathcal{O}(\log N\log q))^2\,I_{N}(z^{-1})\,d\alpha \\ &
= \frac{\Sigma_{k}(q)}{\phi(q)}\cdot G_{1,k}(N) +\mathcal{O}\left(|\Sigma_{k}(q)|\frac{\log N\log q}{\phi(q)} \int_{0}^{1}|\Psi_{k}(z)I_{N}(z^{-1})|\,d\alpha\right) \\  
&\qquad+\mathcal{O}\left(|\Sigma_{k}(q)|\frac{\log^3 N\log^2 q}{\phi(q)}\right)\,.
\end{split}
\end{equation}
Then, by partial summation, we obtain that
\begin{align*}
|\Psi_{k}(z)|&\le \sum_{n}\Lambda(n)e^{-n^k/N} \ll \frac{k}{N}\int_{1}^{+\infty} \psi(t)\cdot t^{k-1}e^{-t^k/N}\,dt \\ &\sim \frac{k}{N}\int_{1}^{+\infty}t^{k}e^{-t^k/N}\,dt = N^{1/k}\int_{1/N}^{+\infty} u^{1/k}e^{-u}\,du \ll N^{1/k},
\end{align*}
by the change of variables $u=t^k/N$. Thus, by \eqref{eq:I_N} again, the second summand in \eqref{sec3:1} is 
\[
\ll \frac{N^{1/k}\log N\log q}{\phi(q)}\int_{0}^{1}|I_{N}(z^{-1})|\,d\alpha\ll N^{1/k}\frac{\log^2 N\log q}{\phi(q)},
\]
which let us express $\mathcal{I}_{1}$ as
\begin{align*}
&\mathcal{I}_{1}=\frac{\Sigma_{k}(q)}{\phi(q)}\cdot G_{1,k}(N) +\mathcal{O}\left(N^{1/k}\frac{\log^2 N\log q}{\phi(q)}\right),
\end{align*}
and, by Lemma \ref{app}, the main term of $\mathcal{I}_{1}$ vanishes according to the value of $\Sigma_{k}(q)$. \newline
Therefore, we can rewrite \eqref{eq:stim} as
\[
G_{q,k}(N)=\frac{\Sigma_{k}(q)}{\phi(q)}\cdot G_{1,k}(N) +\mathcal{O}\left(N^{1/k}\frac{\log^2 N\log q}{\phi(q)}\right) +  \mathcal{I}_{2}+\mathcal{O}(\log^3 N\log^2 q).
\]
\subsection{Evaluation of \texorpdfstring{$\mathcal{I}_{2}$}{I2}}
\label{subsec3.3}
By \eqref{eq:mathcalI2}, we have
\begin{align*}
|\mathcal{I}_{2}| &\le \frac{1}{\phi(q)}\int_{0}^{1} \sum_{\chi^k\ne\chi_{0}} |\Psi_{k}(z,\chi^k)|^2 |I_{N}(z^{-1})|\,d\alpha \\ &\le \max_{\chi^k\ne\chi_{0}}\int_{0}^{1} |\Psi_{k}(z,\chi^k)|^2 |I_{N}(z^{-1})|\,d\alpha.
\end{align*}
Recalling \eqref{eq:I_N}, as in (2.12) of \cite{ike25}, we get that
\begin{equation}
\label{eq:I2est}
\begin{split}
\mathcal{I}_{2} &\ll \max_{\chi^k\ne\chi_{0}}\int_{0}^{1/2} |\Psi_{k}(z,\chi^k)|^2\cdot \min\{N, |\alpha|^{-1}\}\,d\alpha \\ 
&\le\max_{\chi^k\ne\chi_{0}}\,N \int_{0}^{1/N}|\Psi_{k}(z,\chi^k)|^2\,d\alpha \\  
&\qquad+\max_{\chi^k\ne\chi_{0}}\,N\sum_{0\le m<\log_{2}N} \frac{1}{2^m}\int_{2^m/N}^{2^{m+1}/N} |\Psi_{k}(z,\chi^k)|^2\,d\alpha
\\ 
&\le N\sum_{0\le m< \log_{2}N} \frac{1}{2^m}\cdot \max_{\chi^k\ne\chi_{0}}\int_{0}^{2^{m+1}/N} |\Psi_{k}(z,\chi^k)|^2\,d\alpha.
\end{split}
\end{equation}
We apply Gallagher's Lemma to this integral, for a fixed $h>0$ that we will choose later, and we have
\begin{align*}
\int_{0}^{1/2h} |\Psi_{k}(z,\chi^k)|^2\,d\alpha &= \int_{0}^{1/2h} \left|\sum_{n}\chi(n)^{k}\Lambda(n)r^{n^k}e(n^k \alpha) \right|^2\,d\alpha \\ 
&\ll \frac{1}{h^2} \int_{-\infty}^{+\infty}\left|\sum_{x<n\le x+h}\chi(n)^{k}\Lambda(n)e^{-n^k/N}\right|^2\,dx \\
&=\frac{1}{h^2} \int_{0}^{h}\left|\sum_{n\le x}\chi(n)^{k}\Lambda(n)e^{-n^k/N}\right|^2\,dx \\ &\qquad+\frac{1}{h^2} \int_{0}^{+\infty}\left|\sum_{x<n\le x+h}\chi(n)^{k}\Lambda(n)e^{-n^k/N}\right|^2\,dx \\ 
&=:\frac{1}{h^2}(I_{1}(N,h)+I_{2}(N,h)).
\end{align*}
Thus, in order to conclude, we have to estimate $I_{1}(N,h)$ and $I_{2}(N,h)$.
Before that, we recall Lemma 2.4 of \cite{ike25}, which keeps holding for $\chi^k$, instead of $\chi$.
We introduce the function
\[
\psi_{k}(x,\chi):=\sum_{n\le x}\chi^k(n) \Lambda(n)
\]
and the two integrals
\[
J_{1}(X):=\int_{0}^{X}|\psi_{k}(x,\chi)|^2\,dx \quad\text{and}\quad J_{2}(X,h):=\int_{0}^{X}|\psi_{k}(x+h,\chi)-\psi_{k}(x,\chi)|^2\,dx.
\]
\begin{lemma}{(New version of Lemma  2.4 of \cite{ike25})}
\label{lem2}
Assuming GRH and $X\ge 1$, for any Dirichlet character $\chi$ s.t. $\chi^k\ne\chi_{0} \bmod q$, we have
\begin{equation*}
J_{1}(X)\ll X^2 \log^2 (2q)
\end{equation*}
and, for $0\le h\le X$,
\begin{equation*}
J_{2}(X,h)\ll (h+1)X \log^2 \left(\frac{3qX}{h+1}\right).
\end{equation*}
\end{lemma}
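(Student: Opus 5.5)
The key observation is that $\chi^k$ is itself a Dirichlet character modulo $q$, and the hypothesis $\chi^k\ne\chi_{0}$ says it is non-principal. So $\psi_{k}(x,\chi)=\psi(x,\chi^k)$ is the ordinary Chebyshev function of a non-principal character, and I would reduce Lemma \ref{lem2} to the classical GRH estimates of Lemma 2.4 of \cite{ike25} (equivalently, Lemma 2 of Goldston and Vaughan \cite{gol96}) applied to $\psi_1:=\chi^k$. Since GRH is assumed for every character modulo $q$, and also for the primitive characters inducing them, it applies in particular to $L(s,\chi^k)$.

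\textbf{Bound for $J_{1}$.} Let $\psi_1=\chi^k$ be induced by a primitive character $\chi_{1}$ modulo $q_{1}\mid q$. Then
\[
|\psi(x,\psi_1)-\psi(x,\chi_{1})|\le\sum_{p\mid q}\sum_{p^a\le x}\log p\ll \log x\,\log q ,
\]
so it suffices to work with $\chi_{1}$, at the cost of an admissible error. For $\chi_{1}$ I would use the truncated explicit formula (Davenport, Ch.~19), which has no main term because $\chi_{1}\ne\chi_{0}$:
\[
\psi(x,\chi_{1})=-\sum_{|\gamma|\le T}\frac{x^{\rho}}{\rho}+\mathcal{O}\Big(\frac{x\log^2(qxT)}{T}+\log^2 (qx)\Big).
\]
Inserting this into $\int_{X}^{2X}|\cdot|^2\,dx$ and expanding the square gives the double sum
\[
\sum_{\rho,\rho'}\frac{1}{\rho\overline{\rho'}}\int_X^{2X}x^{1+i(\gamma-\gamma')}\,dx\ll X^2\sum_{\rho,\rho'}\frac{1}{|\rho\rho'|(1+|\gamma-\gamma'|)},
\]
where GRH gives $\Re\rho=\Re\rho'=1/2$, hence the factor $x^{1}$. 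The zero count $N(T+1,\chi_{1})-N(T,\chi_{1})\ll\log(q(|T|+2))$ bounds the double sum by $\ll\log^2 (2q)$, as in the classical computation for $\int|\psi(x)-x|^2$. Summing over dyadic blocks then yields $J_{1}(X)\ll X^2\log^2(2q)$. The range $x\le 1$ contributes nothing, and small $x$ is absorbed by the trivial bound $\psi(x,\cdot)\ll x$.

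\textbf{Bound for $J_{2}$.} If $h<1$, I would use the trivial bound for $\psi_{k}(x+h,\chi)-\psi_{k}(x,\chi)$: the interval $(x,x+h]$ contains at most one integer, so $\int_0^X|\cdot|^2\,dx\ll h\,X\log^2 X$, which is admissible. For $1\le h\le X$, I would follow Goldston--Vaughan \cite{gol96} via Saffari--Vaughan/Gallagher. Writing the increment through the explicit formula, it equals $-\sum_\rho \frac{(x+h)^\rho-x^\rho}{\rho}$ plus an error. The key step is
\[
\int_X^{2X}\Big|\sum_{\rho}\frac{(x+h)^{\rho}-x^{\rho}}{\rho}\Big|^2dx\ll hX\log^2\frac{3qX}{h},
\]
which comes from $\big|\frac{(1+h/x)^\rho-1}{\rho}\big|\ll\min(h/X,1/|\gamma|)$ combined with the same diagonal-dominance bound as above. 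Equivalently, this is Gallagher's lemma in reverse applied to $\int_{-1/h}^{1/h}|\sum_\rho x^{\rho}\ldots|^2$, using density of zeros $\ll\log(q|\gamma|)$: zeros with $|\gamma|\le X/h$ contribute $(h/X)^2$ times a count of size $(X/h)\log(qX/h)$, giving the stated logarithm. The passage from $\chi_{1}$ to $\psi_1$ again costs at most $\ll hX\log^2 q$ in total, because the increments differ only at prime powers dividing $q$ and there are $\ll \log q\log X$ such terms up to $X$, each contributing $\ll\log X$ over a set of $x$ of measure $h$.

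\textbf{Main obstacle.} The $J_{2}$ bound carries essentially all the difficulty. One must obtain exactly $\log^2(3qX/(h+1))$ rather than $\log^2(qX)$, which requires handling the zeros with $|\gamma|>X/h$ by the $1/|\gamma|$ bound together with the off-diagonal terms. Everything else is a verbatim transcription of \cite{ike25}, because $\chi^k$ is a non-principal character to which the hypotheses of Lemma 2.4 there apply directly. In practice I would state this reduction and cite Lemma 2.4 of \cite{ike25} with $\chi$ replaced by $\chi^k$.
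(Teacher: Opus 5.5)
Your proposal is correct and follows the same route as the paper. The paper also views $\psi_k(x,\chi)$ as the Chebyshev function of the non-principal character $\chi^k$ modulo $q$. It then invokes Lemma~2 of Goldston--Vaughan \cite{gol96} (with Davenport, Ch.~19) when $\chi^k$ is primitive, and otherwise passes to the primitive character $\chi_1$ inducing $\chi^k$ as in Lemma~2.4 of \cite{ike25}. Your sketches of the $J_1$ and $J_2$ bounds fill in what the paper calls ``straightforward''; the $J_2$ step has $x$-dependent coefficients and really needs the Saffari--Vaughan averaging you mention, but since you end by citing Lemma~2.4 of \cite{ike25} for $\chi^k$, nothing is missing.
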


\begin{proof}
For $\chi^k$ primitive character, it is straightforward from Lemma $2$ of Goldston and Vaughan \cite{gol96} and Chapter 19 of Davenport \cite{dav00}, since the $k$-th power does not affect the argument based on Perron's formula
\[
\psi_{k}(z,\chi)=-\frac{1}{2\pi i}\int_{(c)}\frac{L'}{L}(s,\chi^k)\cdot\frac{x^s}{s}\,ds.
\]
Otherwise, we consider the primitive character $\chi_{1}$ that induces $\chi^k$, (as in Lemma \ref{lem1}) and we argue like in the proof of Lemma 2.4 of \cite{ike25}.
\end{proof}

\begin{itemize}
\item{\textbf{Evaluation of $I_{1}(N,h)$}}: \newline By partial summation, we have
\[
\sum_{n\le x} \chi^k (n)\Lambda(n)e^{-n^k/N} = \psi_{k}(x,\chi)e^{-x^k/N}+\frac{k}{N}\int_{0}^{x} \psi_{k}(u,\chi)u^{k-1}e^{-u^k/N}\,du.
\]
Applying the inequality
\begin{equation}
\label{sec3:4}    
|a+b|^2 \le 2|a|^2 + 2|b|^2\, \qquad \forall\,a,b\in \mathbb{C},
\end{equation} 
we get
\begin{align*}
I_{1}(N,h) &\le 2\int_{0}^{h} |\psi_{k}(x,\chi)|^2e^{-2x^k/N}\,dx \\ \\ &\quad+\frac{2k^2}{N^2}\int_{0}^{h} \left|\int_{0}^{x}\psi_{k} (u,\chi)u^{k-1}e^{-u^k/N}\,du \right|^2 \,dx \\ \\
&=: 2 [(i)+k^2 (ii)].
\end{align*}
For $(i)$, 
like in \cite{ike25}, we notice that 
\[
(i):=\int_{0}^{h}|\psi_{k}(x,\chi)|^2\,e^{-2x^k/N}\,dx \ll \int_{0}^{h}|\psi_{k}(x,\chi)|^2\,dx \ll J_{1}(h)\ll h^2 \log^2 (2q),
\]
by definition and property of $J_{1}$ in Lemma \ref{lem2}.
For $(ii)$, denoting by
\[
(ii)=:\frac{1}{N^2} \int_{0}^{h} S_{0,x}^2\,dx, 
\]
rewriting $e^{-u^k /N}=e^{-u^k /2N}e^{-u^k /2N}$ and by the Cauchy-Schwarz inequality, we have 
{\small
\begin{equation}
\label{eq:Sox}
S_{0,x}^2\le \left(\int_{0}^{x} b^2 (u)e^{-u^k/N}\,du \right)\left(\int_{0}^{x} g^2(u)e^{-u^k/N}\,du\right),
\end{equation}
}
\noindent for $b(u)=|\psi_{k}(u,\chi)|$ and $g(u)=u^{k-1}$.
Now, as in \cite{ike25}, we use the fact that, for any function $0\le f(x) \ll |x|^m$, (for some $m\in\mathbb{N}$), we have
\begin{equation}
\label{sec3:5}
\begin{split}
\int_{0}^{\infty} f(x)e^{-2x^k/N}\,dx &\le \int_{0}^{N^{1/k}} f(x)e^{-2x^k/N}\,dx \\ 
&\qquad+ \sum_{j=1}^{\infty} \int_{jN^{1/k}}^{(j+1)N^{1/k}} f(x)e^{-2x^k/N}\,dx \\ 
&\le \sum_{j=1}^{\infty} \frac{1}{2^{j-1}}\int_{0}^{jN^{1/k}} f(x)\,dx.
\end{split}
\end{equation}
Thus, if we take $f(u)=|\psi_{k}(u,\chi)|^2$ in \eqref{sec3:5}, we get that
\begin{equation}
\label{eq:f(i)}
\begin{split}
\int_{0}^{x}f (u)e^{-2u^k /N}\,du&\le \int_{0}^{\infty} |\psi_{k}(u,\chi)|^2 e^{-2u^k /N}\,dx \le \sum_{j=1}^{\infty} \frac{J_{1}(jN^{1/k})}{2^{j-1}} \\ &\ll N^{2/k}\log^2 (2q)\sum_{j=1}^{\infty} \frac{j^2}{2^j} \ll \log^2 (2q)N^{2/k},
\end{split}
\end{equation}
where $J_{1}$ is defined in Lemma \ref{lem2}. 
With the change of variables $t=u^{k}/N$, we can bound the other integral in \eqref{eq:Sox}, in terms of Gamma function, as
\begin{align*}
\int_{0}^{x}g^2 (u)e^{-u^k /N}\,du &=\int_{0}^{x} u^{2(k-1)}e^{-u^k/N}\,du\asymp_{k} N^{2-1/k} \int_{0}^{x^k /N} t^{1-1/k}e^{-t}\,dt \\
&\ll_{k} N^{2-1/k}\, \Gamma(2-1/k)\asymp_{k}N^{2-1/k}.
\end{align*}
Therefore, we get that
\[
(ii) \ll \frac{\log^2 (2q)}{N^2}\int_{0}^{h}N^{2/k}\cdot N^{2-1/k}\,dx = hN^{1/k}\log^2 (2q) ,
\]
and thus we conclude that
\begin{equation}
\label{eq:I1}
I_{1}(N,h)\ll (\log^2 (2q))\cdot(h^2+hN^{1/k}).
\end{equation}
\item {\textbf{Evaluation of $I_{2}(N,h)$:}} \newline
Again by partial summation and adding and subtracting $\psi_{k}(x+h,\chi)e^{-x^k /N}$,
\begin{align*}
\sum_{x<n\le x+h}\chi^k(n)\Lambda(n)e^{-n^k/N} &= 
(\psi_{k}(x+h,\chi) - \psi_{k}(x,\chi))e^{-x^k/N} \\ &\qquad+ \psi_{k}(x+h,\chi)(e^{-(x+h)^k/N} - e^{-x^k/N}) \\ &\qquad+ \frac{k}{N}\int_{x}^{x+h} \psi_{k}(u,\chi)u^{k-1}e^{-u^k/N}\,du.
\end{align*}
We further observe that
\begin{equation}
\label{eq:dif-exp}
|e^{-(x+h)^k /N}-e^{-x^k /N}|=\left|-\int_{x^k /N}^{(x+h)^k /N} e^{-u}\,du\right| \ll \frac{h}{N}x^{k-1}e^{-x^k /N},
\end{equation}
as in \cite{ike25}.
Therefore, using \eqref{sec3:4}, we can bound $I_{2}(N,h)$ as
\begin{equation}
\label{eq:I2h}
\begin{split}
I_{2}(N,h) &\ll_{k} \int_{0}^{+\infty} |\psi_{k}(x+h,\chi) - \psi_{k}(x,\chi)|^2e^{-2x^k/N}\,dx \\ 
&\qquad+\frac{h^2}{N^2}\int_{0}^{+\infty} |\psi_{k}(x+h,\chi)|^2 x^{2(k-1)}e^{-2x^k/N}\,dx \\  
&\qquad+ \frac{1}{N^2} \int_{0}^{+\infty}
\left(\int_{x}^{x+h} |\psi_{k}(u,\chi)|u^{k-1}e^{-u^k/N}\,du \right)^2\,dx,
\end{split}
\end{equation}
where we have replaced \eqref{eq:dif-exp} with our previous estimate. 
By definition of $J_{2}$ in Lemma \ref{lem2}, we find that
\begin{align}
\notag
\int_{0}^{+\infty} |\psi_{k}(x+h,\chi)-\psi_{k}(x,\chi)|^2 &e^{-2x^k /N}\,dx \ll \sum_{j=1}^{\infty}\frac{J_{2}(jN^{1/k},h)} {2^{j-1}} \\ \label{eq:sumI21}
&\ll (h+1)N^{1/k}\sum_{j\ge 1}j\log^2 \left(\frac{3q jN^{1/k}}{h+1} \right),
\end{align}
which gives us the bound for the first integral in \eqref{eq:I2h}.
For the second term in \eqref{eq:I2h}, applying \eqref{sec3:5} with $f(x)=|x^{k-1}\cdot\psi_{k}(x+h,\chi)|^2$, definition of $J_{1}$ in Lemma \ref{lem2} and by partial summation, we get that
\begin{equation}
\label{eq:I2hbis}
\begin{split}
\int_{0}^{+\infty}&|\psi_{k}(x+h,\chi)|^2 |e^{-(x+h)^k/N}-e^{-x^k /N}|^2\,dx \\ &\ll \frac{h^2}{N^2}\int_{0}^{+\infty}|\psi_{k}(x+h,\chi)|^2 x^{2(k-1)}e^{-2x^k /N}\,dx \\
&=\frac{h^2}{N^2}\int_{0}^{+\infty} f(x)e^{-2x^k/N}\,dx \\ &\ll \frac{h^2}{N^2}\sum_{j\ge 1}\frac{1}{2^{j-1}}\int_{0}^{jN^{1/k}} f(x)\,dx \\ &= \frac{h^2}{N^2}\sum_{j\ge 1} \frac{1}{2^{j-1}}\left(J_{1}(x)x^{2(k-1)}\bigg|_{0}^{jN^{1/k}}-\int_{0}^{jN^{1/k}}2(k-1)J_{1}(x)x^{2k-3}\,dx\right) \\ &\ll \frac{h^2}{N^2}\sum_{j\ge 1} \frac{1}{2^{j-1}}J_{1}(jN^{1/k})j^{2(k-1)}N^{2-2/k}\ll h^2\log^2 (2q)\sum_{j\ge 1} \frac{j^{2k}}{2^{j-1}}.
\end{split}
\end{equation}
Similarly, for the third summand in \eqref{eq:I2h}, 
denoting by 
\[
S_{x, \,x+h}:=\int_{x}^{x+h} |\psi_{k}(u,\chi)|u^{k-1}e^{-u^k/N}\,du,
\]
by the Cauchy-Schwarz inequality, again Lemma \ref{lem2} and changing the order of integration, we have 
\begin{equation}
\label{eq:Sxh}
\begin{split}
\frac{1}{N^2}\int_{0}^{+\infty}S_{x,x+h}^2\,dx &\ll \frac{h}{N^2}\int_{0}^{+\infty}\int_{x}^{x+h}u^{2(k-1)}|\psi_{k}(u,\chi)|^2 e^{-2u^k /N}\,du\,dx  \\&=\frac{h^2}{N^2} \int_{0}^{+\infty} x^{2(k-1)}|\psi_{k}(x,\chi)|^2 e^{-2x^k /N}\,dx  \\  
&\ll \frac{h^2}{N^2}N^2 \log^2 (2q)\sum_{j\ge 1}\frac{j^{2k}}{2^{j-1}}.
\end{split}
\end{equation}
By \eqref{sec3:5} and collecting \eqref{eq:sumI21}, \eqref{eq:I2hbis} and \eqref{eq:Sxh}, we get the following estimate for \eqref{eq:I2h}
\begin{equation}
\label{eq: stimafinI2}
\begin{split}
I_{2}(N,h) &\ll \sum_{j=1}^{\infty}\frac{1}{2^j}J_{2}(jN^{1/k},h) +2h^2 \log^2 (2q)\sum_{j\ge 1} \frac{j^{2k}}{2^j} \\ &\ll \sum_{j=1}^{\infty}\frac{1}{2^j}(h+1)jN^{1/k}\log^2 \left(\frac{3qjN^{1/k}}{h+1}\right) + 2 h^2\log^2 (2q) \\ 
&\ll hN^{1/k}\log^2 N + h^2\log^2 (2q).
\end{split}
\end{equation}
\end{itemize}
By \eqref{eq:I1} and \eqref{eq: stimafinI2}, we have that
\begin{align*}
\frac{1}{h^2}({I}_{1}(N,h)+I_{2}(N,h)) &\ll \frac{1}{h^2}((h^2+hN^{1/k})\log^2 (2q) +hN^{1/k}\log^2 N)\\ &\ll\frac{N^{1/k}}{h}\log^2 N,
\end{align*}
where $q$-terms do not appear because $q$ is fixed and thus its contribution in \newline $\log^2 (qjN^{1/k}/(h+1))$, in \eqref{eq: stimafinI2}, can be neglected. 
Eventually, picking $h=N/2^{m+2}$, s.t. $0\le m <\log_{2}N$ and $1/4 < h \le N/4$, we can substitute our estimates in \eqref{eq:I2est}
\begin{align*}
\mathcal{I}_{2} &\ll N\sum_{0\le m<\log_{2}N}\frac{1}{2^m}\left(\max_{\chi^k\ne \chi_{0}}\int_{0}^{2^{m+1}/N} |\Psi_{k}(z,\chi^k)|^2\,d\alpha \right) \\ 
&\ll N\sum_{0\le m<\log_{2}N}\left(\frac{2^{m+2}}{2^m}N^{1/k\,-1}\log^2 N\right)\ll  N^{1/k}\log^3 N.
\end{align*}
As a result, for $k\ge 1$, it is always guaranteed that $\mathcal{I}_{2}=\mathcal{O}( N\,^{1/k}\log^3N)$.

\section{Evaluation of \texorpdfstring{$\Sigma_{k}(q)$}{Sigma(q)}}
\label{sec:sigma}
\begin{lemma}
\label{app}
Let $k\ge 2$ and $q\ge 1$ fixed natural numbers and define 
\[\Sigma_{k}(q)=\sum_{\chi^{k}=\chi_{0}} \chi(-1).
\]
Then, $\Sigma_{k}(q)$ is a multiplicative function. Moreover, denoting by 

\[
\delta:=\frac{\varphi(p^{\alpha})}{(k, \varphi(p^{\alpha}))},
\]
\newline 
for $p\ge 3$ and $\alpha\ge 1$, we have that
\[
\Sigma_{k}(p^{\alpha})=\begin{cases}
(k, \varphi(p^{\alpha})) & \text{if $\delta$ is even}  \\ 
0 & \text{if $\delta$ is odd and $(k, \varphi(p^{\alpha}))$ is even} \\ 
1 & \text{if $\delta$ is odd and $(k, \varphi(p^{\alpha}))$ is odd},
\end{cases}
\]
while, for $p=2$, $\alpha\ge 1$ and $\beta:=\max\{\gamma\in\mathbb{N}_{\ge 0}\,:\,2^{\gamma}\mid k\}$, 
\[
\Sigma_{k}(2^{\alpha})=\begin{cases}
1 & \text{if $\alpha=1$ or if $k$ is odd} \\
0 & \text{if $\alpha=2$ and $k$ is even} \\
2^{\beta}=(2^{\beta}, \varphi(2^{\alpha-1})) & \text{if $3\le\beta < \alpha-2$ and $k$ is even} \\ 
(k, \varphi(2^{\alpha})) & \text{if $\beta\ge\alpha-2$ and $k$ is even}. \\ 
\end{cases}
\]
\end{lemma}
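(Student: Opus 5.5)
The plan is to reduce $\Sigma_{k}(q)$ to a statement about the finite abelian group $G=(\mathbb{Z}/q\mathbb{Z})^{*}$ and its dual. The characters with $\chi^{k}=\chi_{0}$ form a subgroup $H=\widehat{G}[k]$ of $\widehat{G}$. This subgroup is canonically the dual of $G/G^{k}$, where $G^{k}=\{a^{k}: a\in G\}$, so $|H|=|G/G^{k}|=|G[k]|$. The map $\chi\mapsto\chi(-1)$ is a character of $H$, and orthogonality on $H$ gives
\[
\Sigma_{k}(q)=\sum_{\chi\in H}\chi(-1)=|H|\cdot\mathbbm{1}\{\chi(-1)=1\ \forall\chi\in H\}=|H|\cdot\mathbbm{1}\{-1\in G^{k}\}.
\]
The last equality holds because an element of $G$ is killed by every character of $G/G^{k}$ exactly when it lies in $G^{k}$. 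So $\Sigma_{k}(q)$ equals the number of solutions of $\chi^{k}=\chi_{0}$ when $-1$ is a $k$-th power modulo $q$, and $0$ otherwise. This is the link with the solvability of $x^{k}\equiv -1 \bmod q$ mentioned in Remark \ref{rem:dif}.

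\emph{Multiplicativity.} For $(q_{1},q_{2})=1$, the Chinese remainder theorem gives $G\cong G_{1}\times G_{2}$. Every $\chi \bmod q_{1}q_{2}$ factors uniquely as $\chi_{1}\chi_{2}$. Moreover $\chi^{k}=\chi_{0}$ if and only if $\chi_{i}^{k}$ is principal for both $i$, and $\chi(-1)=\chi_{1}(-1)\chi_{2}(-1)$. Hence the sum factors directly. Equivalently, both $|H|$ and the indicator $\mathbbm{1}\{-1\in G^{k}\}$ are multiplicative in $q$.

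\emph{Odd prime powers.} For $p\ge 3$ the group $G$ is cyclic of order $\varphi=\varphi(p^{\alpha})$. Then $|H|=(k,\varphi)$, and $G^{k}=G^{(k,\varphi)}$ is the unique subgroup of order $\delta$. Since $-1$ is the unique element of order $2$, we get $-1\in G^{k}$ if and only if $\delta$ is even. This yields the first case of the statement and $\Sigma_{k}(p^{\alpha})=0$ when $\delta$ is odd. The third listed case never occurs, because $\delta$ and $(k,\varphi)$ both odd would force $\varphi(p^{\alpha})$ to be odd.

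\emph{Powers of $2$.} Here the same formula settles $\alpha=1$ (trivial group, $\Sigma=1$) and $k$ odd (then $H$ is trivial, $\Sigma=1$). For $\alpha=2$ and $k$ even we have $G=\{\pm1\}$, so $G^{k}=\{1\}$ and $\Sigma=0$.

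For $\alpha\ge 3$ I would use $G=\{\pm1\}\times\langle 5\rangle$ with $\langle 5\rangle$ of order $2^{\alpha-2}$. When $k$ is even, $G^{k}=\langle 5^{k}\rangle\subset\langle 5\rangle$, which does not contain $-1$; concretely, $x^{k}\equiv 1\bmod 8$ for every odd $x$. The indicator therefore vanishes, and the formula gives $\Sigma_{k}(2^{\alpha})=0$ for all $\alpha\ge 3$ with $k$ even.

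This is the step I regard as the real obstacle, because it \emph{disagrees} with the last two lines of the statement. A direct check with $q=8$, $k=2$ confirms the disagreement. All four characters mod $8$ satisfy $\chi^{2}=\chi_{0}$, and their values at $-1$ are $1,-1,1,-1$, so $\Sigma_{2}(8)=0$, whereas the statement predicts $(2,\varphi(8))=2$.

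So my plan is to prove the clean formula $\Sigma_{k}(q)=|\widehat{G}[k]|\cdot\mathbbm{1}\{-1\in G^{k}\}$ and derive from it the odd-prime cases as stated. For $p=2$ the formula gives $\Sigma_{k}(2^{\alpha})=1$ if $\alpha=1$ or $k$ is odd, and $\Sigma_{k}(2^{\alpha})=0$ if $\alpha\ge 2$ and $k$ is even. The cases $3\le\beta<\alpha-2$ and $\beta\ge\alpha-2$ in the statement should then be corrected to $0$. Presumably they come from counting $|H|$ without imposing the condition $-1\in G^{k}$.
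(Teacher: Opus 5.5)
Your argument is correct, and so is your objection: the identity $\Sigma_k(q)=|G/G^k|\cdot\mathbbm{1}\{-1\in G^k\}$ is sound. It reproduces the odd-prime cases, including your correct observation that the third case is vacuous. For $q=2^\alpha$ with $\alpha\ge 3$ and $k$ even it gives $0$, so the last two lines of the statement are false.

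Your check $\Sigma_2(8)=0\neq 2$ is valid. Another is $q=16$, $k=4$: the exponent of $(\mathbb{Z}/16\mathbb{Z})^{*}$ is $4$, so every character satisfies $\chi^4=\chi_0$. Hence $\Sigma_4(16)=\sum_{\chi \bmod 16}\chi(-1)=0$, while the statement predicts $(4,8)=4$.

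An independent sanity check: for $k$ even and $m$ odd, $m^k\equiv 1 \pmod 8$. So $m_1^k+m_2^k\equiv 2 \pmod 8$ is never divisible by $4$ when $m_1m_2$ is odd, and $\Sigma_k(q)$ must vanish when $4\mid q$. Note also that the stated case split for $p=2$ leaves $\beta\in\{1,2\}$ with $\beta<\alpha-2$ uncovered, e.g. $q=32$, $k=2$.

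The paper takes a different route, by index calculus. For odd $p$ it writes $\chi(g)=\omega^j$ and solves $kj\equiv 0 \bmod \varphi(p^\alpha)$, getting $j\in\delta\mathbb{Z}$. It then uses $\chi(-1)=\chi(g)^{\varphi(p^\alpha)/2}=(-1)^j$ and sums, which agrees with your cyclic-subgroup argument.

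For $2^\alpha$ with $\alpha\ge 3$ it writes $\chi(n)=\omega^{\nu(n)}(\omega')^{\nu'(n)}$, where $n\equiv(-1)^{\nu}5^{\nu'}$. This is where it breaks, in three places:
\begin{itemize}
\item When $\beta\ge\alpha-2$, it observes that every character qualifies but then records a cardinality, $(k,\varphi(2^\alpha))$. It never evaluates $\sum_\chi\chi(-1)$, which is $0$ by orthogonality.
\item When $\beta<\alpha-2$, it sets $\nu=0$ and evaluates $\chi(-1)$ through the $\langle 5\rangle$ component, as in the cyclic case. But $\nu(-1)=1$ and $\nu'(-1)=0$, so $\chi(-1)=\omega$ runs freely over $\pm1$ and the sum cancels.
\item The closing ``confirmation'' claims that $z^{2^\beta}\equiv -1 \pmod{2^\alpha}$ has $2^\beta$ solutions. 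This is false, since odd squares are $\equiv 1 \pmod 8$.
\end{itemize}

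Your formulation avoids this trap because the only input about $-1$ is the single test $-1\in G^k$. Moreover, $|G/G^k|\cdot\mathbbm{1}\{-1\in G^k\}=\#\{z\in(\mathbb{Z}/q\mathbb{Z})^{*}: z^{k}\equiv -1 \pmod q\}$. This makes Remark~\ref{rem1} precise and gives multiplicativity immediately via the Chinese remainder theorem.

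The corrected $p=2$ line is: $\Sigma_k(2^\alpha)=1$ if $\alpha=1$ or $k$ is odd, and $\Sigma_k(2^\alpha)=0$ otherwise. In particular, the main term in Theorem~\ref{th2} vanishes whenever $4\mid q$ and $k$ is even.
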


\begin{remark}
\label{rem:int} The proof of Lemma \ref{app} relies on the cyclic group structure of $\mathbb{Z}_{p^{\alpha}}^*$, for $p$ odd prime number. Thus, we can find a generator $g\in\mathbb{Z}_{p^{\alpha}}^*$ and compute the values of all the $\varphi(p^\alpha)$-powers of $\chi(g)$.
\end{remark}

\begin{remark}
\label{rem1}
By the construction of $F_{q,k}(z)$ in Theorem \ref{th3}, (in particular, by the orthogonality property of characters in \eqref{eq:ort}), studying $\Sigma_{k}(q)$ is equivalent to counting the number of solutions of the equation
\[
\begin{cases}
m_{1}^k + m_{2}^k\equiv 0  &\mod q \\ 
(m_{1}m_{2}, q)=1 &
\end{cases}
\quad\iff\quad
\left(\frac{m_{2}}{m_{1}} \right)^k \equiv -1 \mod q.
\]
\end{remark}
\begin{remark}
\label{rem:sys}
Both of these strategies for the proof of Lemma \ref{app} will be first analysed with $q=p^{\alpha}$, $p$ prime and $\alpha\ge 1$; then, by the multiplicativity of $\Sigma_{k}(q)$, (or by the Chinese remainder theorem), for every $q$.
\end{remark}

\begin{proof}[Proof of Lemma \ref{app}]
We first show the multiplicativity of $\Sigma_{k}(q)$, i.e. that $\Sigma_{k}(1)=1$ and that, if $\chi_{1},\dots,\chi_{t}$ are characters mod \,$ (q_{1},\dots,q_{t})$ s.t. $(q_{i},q_{j})=1$, $\forall\,i\ne j$, $i,j\in\{1,\dots,t\}$, then
\[
\Sigma_{k}(q):=\Sigma_{k}(q_{1}\dots q_{t})=\Sigma_{k}(q_{1})\cdot\dots\cdot\Sigma_{k}(q_{t}),
\]
for $\chi$ character mod \,$q$. In fact, 
\begin{align*}
\Sigma_{k}(1)=\sum_{\chi^{k}=\chi_{0} \bmod 1}  \chi(-1)=\chi(-1)=1 \iff \chi=\chi_{0}.
\end{align*}
Then, we recall that, by the multiplicativity of the characters, we have
\[
\chi(n; q) \iff \chi_{1}(n; q_{1})\cdot\dots\cdot\chi_{t}(n;q_{t}),
\]
where $\chi_{j}(n;q_{j})= \chi_{j}(n) \bmod q_{j}$. \newline Therefore,
\begin{align*}
\chi^k (n;q)=\chi_{0}(n;q) &\iff (\chi_{1}(n;q_{1})\dots\chi_{t}(n;q_{t}))^k =\chi_{0}(n;q) \\ 
&\iff (\chi_{1}(n;q_{1})\dots\chi_{t}(n;q_{t}))^k=\chi_{0}(n;q_{1})\dots\chi_{0}(n;q_{t}).
\end{align*}
Now, choosing $\bar{n}, n\in\mathbb{Z}^{*}_{q_{1}\cdots q_{t}}$ s.t.
\[
\begin{cases}
&\bar{n}\equiv n \mod q_{i} \\
&\bar{n}\equiv 1 \mod q_{j}, \quad\forall\,i\ne j, j=1,\dots,i-1,i+1,\dots,t,
\end{cases}
\]
since $(q_{i}, q_{j})=1, \, \forall\,i\ne j$, we obtain that, for $j=1,\dots,t$, 
\[
(\chi_{1}(\bar{n};q_{1})\dots\chi_{t}(\bar{n};q_{t}))^k=\chi_{0}(\bar{n};q_{1})\dots\chi_{0}(\bar{n};q_{t}) \implies \chi_{j}(\bar{n};q_{j})^k =\chi_{0}(\bar{n};q_{j}), 
\]
while the other implication is trivial.
As a consequence, we get
\begin{align*}
\Sigma_{k}(q)=\sum_{\chi^k =\chi_{0}}\chi(-1)=\sum_{ \chi_{1}^k = \chi_{0}}\chi_{1}(-1)\dots\sum_{ \chi_{t}^k = \chi_{0}}\chi_{t}(-1) 
=\Sigma_{k}(q_{1})\dots\Sigma_{k}(q_{t}).
\end{align*}
As a result, it is enough to compute $\Sigma_{k}(q)$ for $q=p^{\alpha}$ in two cases:
\begin{itemize}
\item{$q=p^{\alpha},\, \alpha\ge 1,\, p\ge 3$};
\item{$q=2^{\alpha},\, \alpha\ge 1$}.
\end{itemize}
For $q=p^{\alpha}$, $p\ge 3$, let $g\in\mathbb{Z}_{q}^*$ be s.t. $\mathbb{Z}_{q}^* = \left \langle g \right \rangle $. By the construction of the characters mod $p^{\alpha}$, we know that, given $\chi \bmod\,p^{\alpha}$, there exists a $\varphi(p^{\alpha})$-th root of unity $\omega$ such that $\chi(g)=\omega$. By Euler's theorem, we also have
\[
g^{\varphi(q)}=1
\implies g^{\varphi(q)/2}=-1\implies \chi(g^{\varphi(q)/2})=\chi(-1).\]
Moreover, we know that 
\[
\chi^k =\chi_{0} \implies \chi^k (g)=\chi(g^k)=1,
\]
which means that $\chi(g)$ is a $k$-th root of unity. Thus, we get
\[
\begin{cases}
\chi(g^{\varphi(q)})=\chi(g)^{\varphi(q)}=1 & \\ \\
\chi(g^k)=\chi(g)^k =1 
\end{cases}\Longrightarrow\quad \exists\,j\in\{0,\dots,\varphi(q)-1\}\,:\, \chi(g)^k = 1 =(\omega^j)^k,
\]
In particular, we obtain that
\[
\omega^{jk}=1 \iff \varphi(q)\mid jk
\]
or, equivalently, that $j$ is a solution of the following equation:
\[
\begin{cases}
kx\equiv 0  \mod \varphi(q) \\ \\
x\in\mathbb{Z}_{\varphi(q)}
\end{cases} \quad \Longleftrightarrow \quad x\equiv 0 \mod \delta,
\]
where $\delta$ has been defined in the statement.
We define the set of all its $(k,\varphi(q))$ solutions as
\[
A_{k}(q):=\{0, \delta, 2\delta, \dots, ((k,\varphi(q))-1)\delta\,\}.
\]
Therefore, we can rewrite 
\[
\Sigma_{k}(q)=\sum_{j\in A_{k}(q)} \omega^{j\varphi(q)/2}.
\]
Furthermore, we notice that
\[
\omega^{\delta\varphi(q)/2}=
\begin{cases}
1 & \text{if $\delta$ is even} \\ 
-1 & \text{if $\delta$ is odd}.
\end{cases}
\]
As a result, we have that, if $\delta$ is even, then
\begin{align*}
\Sigma_{k}(q)&=1+\omega^{\delta\varphi(q)/2}+\dots+\omega^{\delta((k,\varphi(q))-1)/2} \\ 
&=1+\dots+(1)^{(k,\varphi(q))-1} = (k,\varphi(q)).
\end{align*}
Instead, if $\delta$ is odd, 

\[
\Sigma_{k}(q)=1-1+\dots+(-1)^{(k,\varphi(q))-1}=\begin{cases}
0 & \text{if $(k,\varphi(q))$ is even} \\ 
1 & \text{if $(k,\varphi(q))$ is odd}.
\end{cases}
\]
For $q=2^{\alpha}$, it is better to directly compute the first two cases:

\begin{enumerate}
\item \textbf{\emph{q=2:}} we only have one character, (by the fact that $\varphi(2)=1$), given by
\[
\chi(n)=\begin{cases}
1 & \text{if $(n,2)=1$ $\iff$ $n$ is odd} \\ 
0 & \text{if $(n,2)>1$ $\iff$ $n$ is even}.
\end{cases}
\]
Therefore, 
\[
\chi^k (n)=\begin{cases}
1 & \text{if $n$ is odd} \\
0 & \text{if $n$ is even},
\end{cases}
\]
which implies that 
\[
\Sigma_{k}(2)=\chi(-1)=1.
\]
\item \textbf{\emph{q=4:}} By Davenport's construction of characters,  (see \cite{dav00}, Chapters 1 and 4), we get $\varphi(4)=2$ characters mod 4
\[
\chi_{j}(n)=\omega^{\nu(n)} \quad:\quad \omega^{2}=1, \quad j=0,1,
\]
where $\nu$ generally denotes the index of $n$ with respect to a fixed primitive root of mod $p^{\alpha}$.
More precisely, we obtain $\chi_{0}$ - the principal character - and $\chi_{1}$ s.t.
\[
\chi_{1}(n)=
\begin{cases}
1 & \text{if $n\equiv 1 \mod 4$} \\
-1 & \text{if $n\equiv -1 \mod 4$}
\end{cases}
\]
and both of them are such that $\chi_{j}^k =\chi_{0}$.
As a result, 
\[
\Sigma_{k}(4)=\chi_{0}(-1)+\chi_{1}(-1)=1-1=0.
\]
\end{enumerate}
If $q=2^{\alpha}$, $\alpha\ge 3$, there exist $\varphi(2^{\alpha})=2^{\alpha-1}$ characters $\chi_{j} \mod 2^{\alpha}$, for $j=0,\dots,2^{\alpha-1}-1$, of the form
\[
\chi_{j}(n)=\omega^{\nu(n)}(\omega')^{\nu'(n)} \quad :\quad 
\begin{cases}
\omega^2=\omega^{\varphi(4)} =1 \\ 
(\omega')^{2^{\alpha-2}}=(\omega')^{\varphi(2^{\alpha-1})}=1.
\end{cases}
\]
Arguing as before, it holds that
\begin{align*}
&\chi_{j}^k (n)=\chi_{0}(n)=1 \iff \omega^{k\nu(n)}(\omega')^{k\nu'(n)}=1 \iff \begin{cases}
&2\mid k\nu(n) \\ \\ 
&2^{\alpha-2}\mid k\nu'(n)
\end{cases} \\ \\ &\iff \begin{cases}
&kx_{1}\equiv 0 \mod 2 \\ 
&kx_{2}\equiv 0 \mod 2^{\alpha-2}
\end{cases} \iff 
\begin{cases}
&x_{1}\equiv 0  \mod \left(\frac{2}{(k,2)} \right) \\ 
&x_{2}\equiv 0 \mod \left(\frac{2^{\alpha-2}}{(k, 2^{\alpha-2})} \right).
\end{cases}
\end{align*}
Thus, we should distinguish three possibilities, according to the value of 
\[
\beta:=\max\{\gamma\in\mathbb{N}_{\ge 0}\,:\, 2^{\gamma}\mid k\}.
\]
\begin{itemize}
\item If $\beta=0$, then $(k,2)=1$, i.e. $k$ is odd, which means that there exists exactly one solution of the system. In other words, there is a unique character $\chi_{j}$ mod $2^{\alpha}$ s.t. $\chi_{j}^k = \chi_{0}$ and, therefore, $\Sigma_{k}(2^{\alpha})=1$.
\item If $\beta\ge 2^{\alpha-2}$, then $k$ is even and the system is satisfied by every couple $(x_{1}, x_{2})\in\mathbb{Z}_{2^{\alpha-1}}\times \mathbb{Z}_{2^{\alpha-1}}$. We conclude that $\Sigma_{k}(2^{\alpha})=(k, \varphi(2^{\alpha}))$.
\item If $1\le \beta < \alpha-2$, $2^{\beta}\mid k$ but $2^{\beta+1}\nmid k$, thus 
\begin{align*}
\begin{cases}
&x_{1}\equiv 0 \mod \left(\frac{2}{(k,2)}=1\right) \quad \forall\, x_{1}\in\mathbb{Z} \\ 
&x_{2} \equiv 0 \mod \left(\frac{2^{\alpha-2}}{(k,2^{\alpha-2})}\right) \iff x_{2} \equiv 0 \mod \left(\frac{2^{\alpha-2}}{2^{\beta}}=2^{\alpha-\beta-2}=:N_{2}\right).
\end{cases}
\end{align*}
In this way, we get that $\nu=0$, while $\nu'\in A_{k}(2^{\alpha-2})$, for
\[
A_{k}(2^{\alpha-2})=\{0, N_{2}, \dots, N_{2}\cdot (2^{\beta}-1)\}.
\]
Moreover, we notice that $N_{2}$ is always even, thus
\[
(\omega')^{N_{2}\varphi(2^{\alpha-1})/2}=(\omega')^{N_{2}2^{\alpha-3}}=(\omega')^{2^{2(\alpha-2)-(\beta+1)}}=1
\]
and consequently
\begin{align*}
\Sigma_{k}(2^{\alpha})&=\sum_{\nu'} (\omega')^{\nu'\varphi(2^{\alpha-1})/2} = 1+(\omega')^{N_{2}2^{\alpha-3}}+\dots+(\omega')^{N_{2}2^{\alpha-3}(2^{\beta}-1)} \\ 
&=1+1+\dots+(1)^{2^{\beta}-1}=2^{\beta}.
\end{align*}
Equivalently, as $k=2^{\beta}h$, for $h$ odd, we can look at the equation
\[
z^k \equiv_{q} -1 \iff (z^{2^{\beta}})^h\equiv_{q} (-1)^h \iff z^{2^{\beta}} \equiv_{q} -1.
\]
But we have already seen that, for $k_{1}:=2^{\beta}$ even, there are $(k_{1}, \varphi(2^{\alpha}))$ \newline =\,$(2^{\beta}, 2^{\alpha-1})=2^{\beta}$ solutions. Thus, we have easily confirmed our result, using the second point of view of Remark \ref{rem:sys}.
\end{itemize}
\end{proof}

\bigskip

  AM: Dipartimento di Matematica e Informatica,
  Universit\`a degli Studi di Ferrara, Ferrara, Italy.
  email: \texttt{alessand.migliaccio@edu.unife.it}.
  AZ: [Corresponding Author]
  Dipartimento di Scienze Matematiche, Fisiche e Informatiche,
  Universit\`a di Parma, Parco Area delle Scienze, 53/a, 43124 Parma, Italy
  email: \texttt{alessandro.zaccagnini@unipr.it}

\end{document}